\let\@@pmod\mod
\DeclareRobustCommand{\mod}{\@ifstar\@pmods\@@pmod}
\def\@pmods#1{\mkern4mu({\operator@font mod}\mkern 6mu#1)}
\definecolor{blue}{rgb}{0,0,1}
\definecolor{red}{rgb}{1,0,0}
\definecolor{green}{rgb}{0,.6,.2}
\definecolor{purple}{rgb}{1,0,1}
\long\def\red#1\endred{\textcolor{red}{#1}}
\long\def\blue#1\endblue{\textcolor{blue}{#1}}
\long\def\purple#1\endpurple{\textcolor{purple}{ #1}}
\long\def\green#1\endgreen{\textcolor{green}{#1}}
\newcommand{\g}{\gamma}
\newcommand{\og}{\overset{o}{g}}
\newcommand{\oh}{\overset{o}{h}}
\newcommand{\ok}{\overset{o}{k}}
\newcommand{\C}{\mathbb{C}}
\newcommand{\HH}{\mathfrak{H}}
\DeclareMathOperator{\SL}{SL}
\DeclareMathOperator{\PSL}{PSL}
\newcommand{\sm}{\left(\begin{smallmatrix}}
\newcommand{\esm}{\end{smallmatrix}\right)}
\newcommand{\bpm}{\begin{pmatrix}}
\newcommand{\ebpm}{\end{pmatrix}}
\newtheorem{theorem}{Theorem}
\newtheorem{lemma}[theorem]{Lemma}
\newtheorem{proposition}[theorem]{Proposition}
\theoremstyle{remark}
\numberwithin{theorem}{section}
\numberwithin{equation}{section}
\title{Period-like polynomials for $L$-series associated with half-integral weight cusp forms}
\author{James Branch} 
\address{University of Nottingham}
\email{james.branch@nottingham.ac.uk}
\author{Nikolaos Diamantis} 
\address{University of Nottingham}
\email{nikolaos.diamantis@nottingham.ac.uk}
\author{Wissam Raji}
\address{American University of Beirut and Number Theory Research Unit at Center for Advanced Mathematical Sciences}
\email{wr07@aub.edu.lb}
\author{Larry Rolen}
\address{Vanderbilt University}
\email{larry.rolen@vanderbilt.edu}
\begin{document}
\begin{abstract}
Given the $L$-series of a half-integral weight cusp form, we construct polynomials 
behaving similarly to the classical period polynomial of an integral weight cusp form. We also define a lift of half-integral weight cusp forms to integral weight modular forms that is compatible with the $L$-series of the respective forms. 
\end{abstract}

\maketitle
\section{Introduction}

The Dirichlet series associated by Shimura to half-integral weight modular forms in the last section of his original paper \cite{Sh} has not received as much attention as its integral weight counterpart. Partly because of its failure to possess an Euler product, it has not been extensively studied from arithmetic and algebraic perspectives that have a long history in the case of integral weight modular forms.

In view of this, the two main purposes of this note are: (i) to attach Eichler integrals and period-like polynomials to the $L$-series of a half-integral weight cusp form. This will lead to cohomology classes with coefficients in a \emph{finite}  dimensional vector space in a way that parallels the Eichler cohomology of the integral weight case.
(ii) to define a lift of half-integral weight cusp forms to integral weight modular forms that is compatible with the $L$-series of the respective forms. 

This section presents special cases of the two main results of the note. The first one is obtained from Theorem \ref{Eich} in the special case $N=4$, $k$ such that $4|(k-\frac{5}{2})$ and $a=k-2$:
\begin{theorem} \label{PPintro}Let $k \in \frac12+\mathbb Z$ such that  $k-\frac{5}{2} \in 4 \mathbb N$ and let $f$ be a cusp form of weight $k$ for $\Gamma_0(4)$ such that $f(-1/(4z))=(-2iz)^{k}f(z).$ For each $z$ in the upper half-plane $\mathfrak H$ define the ``Eichler integral"
$$F(z)=\Gamma(k-1)\int_z^{i \infty} f(w) \left ( \sum_{n=0}^{k-\frac{5}{2}} \left (\frac{(4i)^n}{n! \Gamma(k-1-n)}+\frac{4^{k-n-\frac{11}{4}}i^{\frac{5}{2}-n} w^{-\frac{1}{2}}}{(k-\frac{5}{2}-n)!\Gamma(n+\frac{3}{2})}\right ) z^nw^{k-2-n} \right ) dw.$$ Then, 
\begin{enumerate} 
\item For each $z \in \mathfrak H$, $P(z):=F(z)-F(-1/(4z))(-2iz)^{k-\frac{5}{2}}$ is a polynomial of degree at most $k-\frac{5}{2}$ in $z.$
\item We have $$P(z) =i^{k-1}\Gamma(k-1)\sum_{n=0}^{k-\frac{5}{2}}\left (\frac{4^n \Lambda(f, k-1-n)}{n! \Gamma(k-1-n)} + (-1)^{n+1} \frac{4^{k-n-\frac{11}{4}} \Lambda(f, k-\frac{3}{2}-n)}{(k-\frac{5}{2}-n)!\Gamma(n+\frac{3}{2})} \right)z^n,$$
where $\Lambda(f, s)$ is the $L$-series of $f$ (to be defined precisely in the next section). 
\end{enumerate}
\end{theorem}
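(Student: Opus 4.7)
The strategy is to rewrite $F(z)$ as the difference of a full Mellin-type integral and a tail,
$$F(z) = \Gamma(k-1)\int_0^{i\infty} f(w)\,G(z,w)\,dw - \Gamma(k-1)\int_0^{z} f(w)\,G(z,w)\,dw,$$
where $G(z,w)$ denotes the kernel appearing in the definition of $F$, which is a polynomial in $z$ of degree at most $k-\tfrac{5}{2}$. I would show separately that (i) the first integral equals the polynomial claimed in Part (2), and (ii) the second integral equals $-(-2iz)^{k-5/2} F(-1/(4z))$; combining (i) and (ii) yields both parts simultaneously, since the degree bound in Part (1) will be immediate from $G$ being polynomial in $z$ of degree $\le k-\tfrac{5}{2}$.

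For (i), I would interchange the finite sum over $n$ with the integral and apply the standard identity $\int_0^{i\infty} f(w)\,w^{s-1}\,dw = i^s\,\Lambda_f(s)$, which follows from inserting the Fourier expansion of $f$ and integrating term by term. Taking $s = k-1-n$ for the first summand of $G$ and $s = k-\tfrac{3}{2}-n$ for the second, and using the elementary identities $(4i)^n i^{k-1-n} = 4^n i^{k-1}$ and $i^{5/2-n} \cdot i^{k-3/2-n} = (-1)^{n+1} i^{k-1}$, the first integral reduces to exactly the polynomial of Part (2).

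For (ii), starting from $F(-1/(4z))$, I would apply the substitution $w = -1/(4u)$ (so $dw = du/(4u^2)$ and $f(-1/(4u)) = (-2iu)^k f(u)$) and reverse the limits. After multiplying by $(-2iz)^{k-5/2}$, the problem reduces to establishing the kernel self-duality
$$(-2iz)^{k-\frac{5}{2}}\,(-2iu)^k\, G(-1/(4z),\,-1/(4u))/(4u^2) = G(z,u).$$
Verifying this identity is the main obstacle. Under the inversion the two summands of $G$ are interchanged via the reindexing $n \leftrightarrow k-\tfrac{5}{2}-n$, and the verification reduces to (a) simplifying the aggregate phase factor $(-2i)^{2k-5/2}$ together with the principal branches of $(-1/(4u))^{k-2-n}$ and $(-1/(4u))^{k-5/2-n}$, and (b) matching the coefficients $(4i)^n/(n!\,\Gamma(k-1-n))$ and $4^{k-n-11/4} i^{5/2-n}/((k-\tfrac{5}{2}-n)!\,\Gamma(n+\tfrac{3}{2}))$ against each other under this swap. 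The hypothesis $k-\tfrac{5}{2} \in 4\mathbb{N}$ is chosen precisely so that these branch phases collapse to a clean constant and the coefficient matching becomes exact, which explains the specific form of the constants in the kernel.
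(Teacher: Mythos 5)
Your proposal is correct and follows essentially the same route as the paper's proof of the general statement (Theorem \ref{Eich}): the paper likewise writes the period polynomial as $\int_0^{i\infty} f(w)\Phi_a(z,w)\,dw$, computes it via the Mellin representation of $\Lambda_f$, and obtains the cocycle relation from the kernel self-duality under $W_N$, proved by exactly the reindexing $n\mapsto k-\tfrac52-n$ and branch bookkeeping you describe. The only slight imprecision is your remark on the role of $k-\tfrac52\in 4\mathbb N$: the kernel identity itself holds for all half-integral $k>\tfrac52$ (producing an extra factor $i^{k-5/2}$ in the functional equation), and the divisibility hypothesis is only needed to make that factor equal to $1$ so the relation holds without a character twist.
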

The polynomial $P$ shares some of the defining features of the \emph{period polynomial} of integral weight forms: it encodes certain values of $\Lambda(f, s)$ inside the interval $[1, k-1]$ and satisfies one of the period relations. 
Indeed, in Prop. \ref{Brug} we will show that $P$ matches exactly the $(k-\frac{3}{2})$-th partial sum in the Taylor expansion of the (``symmetrised" version of the) Eichler cocycle, as extended to all real weights in \cite{BCD}. Since then the 
period polynomial equals the value at the Fricke involution of an Eichler cocycle (based at $i \infty$), we think of $P$ as an analogue of the period polynomial for half-integral weight cusp forms. The period relation of $P$ (see Th. 3.1 (i)) is not immediately visible from its expression in part (2) of Th. \ref{PPintro}, but it is deduced from its relation with the analogue of the Eichler integral $F(z)$, as is the case with the classical period polynomial.

A first, to our knowledge, attempt to develop a cohomology for $L$-series of half-integral cusp forms was made in \cite{KR}. Its main construction encodes $L$-values inside $[1, k-1]$ and satisfies one of the period relations too. However, the analogue of $P$ in \cite{KR} belongs to an infinite dimensional space whereas $P$ is a polynomial of degree $\le k-\frac{5}{2}.$ Another difference is that, as will be seen in the general form of the theorem in the sequel, our polynomial $P$ can be made to encode values of $\Lambda(f, s)$ at a larger class of finite ``arithmetic sequences" inside $[1, k-1]$.

The second main result presented here is a special case of Theorem \ref{lift2}:
\begin{theorem} \label{Cohintro}Let $k \in \frac12+\mathbb Z$ such that $k-\frac{5}{2} \in 4 \mathbb N$. For each cusp form $f$ of weight $k$ for $\Gamma_0(4)$ such that $f(-1/(4z))=(-2iz)^{k}f(z)$ there exists an \emph{explicit} modular form of (integral) weight $k-\frac12$ and level $4$, given in Th. \ref{expl}, such that, for each odd $n=0, \dots, k-\frac{5}{2}$, we have
\begin{multline}\label{e:lift}
\left (\frac{2^{2n} }{n! \Gamma(k-1-n)} + \frac{2^{2k-5-2n} }{(k-\frac{5}{2}-n)!\Gamma(n+\frac{3}{2})} \right ) \Lambda \left (f, k-\frac{5}{4}-n \right )\\=
\frac{i^{\frac{7}{4}}2^{k-\frac{3}{2}}}{\Gamma(k-1)}\binom{k-\frac{5}{2}}{n}\Lambda \left (g, k-n-\frac{3}{2} \right ).
\end{multline}
\end{theorem}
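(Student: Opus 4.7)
The plan is to reorganise \eqref{e:lift} so that both sides appear as coefficients of polynomials of degree $k - 5/2$, and then invoke the Eichler--Shimura isomorphism to produce the forms $g$ and $h$. First, I would apply the general version of Theorem \ref{Eich}, of which Theorem \ref{PPintro} is the special case $a = k - 2$, with a parameter chosen so that the resulting period polynomial $Q_f(z)$ encodes values of $\Lambda_f$ at the arithmetic progression $k - \tfrac{5}{4} - n$, $n = 0, \ldots, k - \tfrac{5}{2}$. This is exactly the flexibility advertised at the end of Section~1, where it is pointed out that $P$ can be made to encode values of $\Lambda_f$ at a larger class of arithmetic sequences in $[1, k-1]$. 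The coefficient $[z^n]Q_f(z)$ will then equal the LHS of \eqref{e:lift} up to an explicit overall constant, and by Theorem 3.1(i) this $Q_f$ will satisfy a period relation under the Fricke involution $z \mapsto -1/(4z)$.

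Second, the classical period polynomial of an integral weight $\kappa := k - \tfrac{1}{2}$ cusp form $g$ is
$$r_g(z) = \int_0^{i\infty}g(w)(w-z)^{\kappa-2}\,dw = \sum_{n=0}^{k-5/2}\binom{k-5/2}{n}(-1)^n i^{k-3/2-n}\Lambda_g\!\left(k-\tfrac{3}{2}-n\right)z^n,$$
and $\overline{r_h}(z) := \overline{r_h(\bar z)}$ has an analogous expression with $\overline{\Lambda_h}$ and conjugated powers of $i$. Under the hypothesis $k - \tfrac{5}{2} \in 4\mathbb N$, the identity $i^{k-5/2}=1$ lets one rewrite $(-1)^n i^{k-3/2-n}$ as $i^{n+1}$, and similarly $\overline{(-1)^n i^{k-3/2-n}}$ as $i^{-n-1}$, so that the RHS of \eqref{e:lift} becomes exactly $\tfrac{i^{3/4}}{\Gamma(k-1)}[z^n](r_g + \overline{r_h})(z)$. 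Proving \eqref{e:lift} therefore reduces to producing cusp forms $g,h \in S_\kappa(\Gamma_0(4))$ with $Q_f = \tfrac{i^{3/4}}{\Gamma(k-1)}(r_g + \overline{r_h})$.

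The existence and uniqueness of such $(g,h)$ would follow from the Eichler--Shimura isomorphism for $\Gamma_0(4)$, which gives a $\mathbb C$-linear bijection $(g, h) \mapsto r_g + \overline{r_h}$ from $S_\kappa(\Gamma_0(4))^{\oplus 2}$ onto the space of cuspidal period polynomials of degree $\le \kappa - 2$ satisfying the appropriate period relations for $\Gamma_0(4)$. To apply it, one must verify that $Q_f$ lies in this target space.

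The main obstacle is precisely this verification. The period relation for $Q_f$ from Theorem 3.1(i) is stated using the half-integral weight Fricke automorphy factor and the half-integral multiplier system, whereas the integral weight period relation uses the automorphy factor $(-2iz)^{\kappa - 2}$ with the trivial multiplier. The two exponents match since $\kappa - 2 = k - 5/2$, but the multiplier systems differ by a factor that one expects to become trivial on polynomials when $k - \tfrac{5}{2} \in 4\mathbb N$. Once this compatibility is established, equating coefficients of $z^n$ on both sides yields \eqref{e:lift}, and uniqueness of $(g, h)$ is immediate from the injectivity of the period map on cusp forms.
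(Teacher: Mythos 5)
Your overall strategy --- realize the left-hand side as the coefficients of the period polynomial $P_a$ of Theorem \ref{Eich} with $a=k-\frac94$, realize the right-hand side as the coefficients of $r_g+\overline{r_h}$ for integral weight $k-\frac12$ forms, and match the two via Eichler--Shimura --- is the same as the paper's. But the execution has a genuine gap at exactly the point you defer. You propose to invoke ``the Eichler--Shimura isomorphism for $\Gamma_0(4)$, which gives a bijection onto the space of cuspidal period polynomials satisfying the appropriate period relations for $\Gamma_0(4)$.'' No such formulation applies here: the only relation $P_a$ is known to satisfy is $P_a|_{5/2-k,\chi}(W_4+1)=0$, and $W_4$ is \emph{not} an element of $\Gamma_0(4)$. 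Moreover $\Gamma_0(4)/\{\pm1\}$ is free on two generators, so for it there is no description of cohomology classes as single polynomials cut out by period relations in the first place. The paper's fix is structural and you need it: pass to the Fricke extension $\Gamma_0^*(4)=\langle \Gamma_0(4),W_4\rangle$, observe that it is generated by $-1$, $T$, $W_4$ subject only to $(-1)^2=W_4^4=1$, and use this free presentation to define a parabolic $1$-cocycle $\hat\pi_f$ with $\hat\pi_f(W_4)=P_a$ and $\hat\pi_f(T)=\hat\pi_f(-1)=0$; well-definedness is precisely the relation $P_a|(W_4+1)=0$. Only then does the Eichler--Shimura isomorphism (for $\Gamma_0^*(4)$, with a character $\chi$ satisfying $\chi(W_4)=i^{5/2-k}$, trivial in the case $4\mid(k-\frac52)$) produce $(g,h)$ with $g\in S_{k-1/2}(\Gamma_0^*(4),\chi)$, $h\in S_{k-1/2}(\Gamma_0^*(4),\bar\chi)$.

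Two further points. First, Eichler--Shimura gives equality of \emph{cohomology classes}, so a priori $\hat\pi_f=\phi(g,\bar h)+Q|_{\frac52-k,\chi}(\cdot-1)$ for some polynomial $Q$; your claim that uniqueness and the exact coefficient identity are ``immediate from injectivity of the period map'' skips the step of killing this coboundary. The paper does so by noting that both cocycles vanish at $T$, which forces $Q|(T-1)=0$ and hence $Q=0$, and only then does evaluation at $W_4$ yield \eqref{e:lift} on the nose. Second, the obstacle you single out as the main one --- a mismatch of multiplier systems between the half-integral and integral weight period relations --- is not actually present: the coefficient module is $\mathbb{C}_{k-5/2}[z]$ carrying the integral-weight action $|_{5/2-k}$ (twisted at most by the character $\chi$), and the half-integral multiplier system only ever acts on $f$ itself inside the integral defining $P_a$, never on the polynomial. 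So you have flagged a phantom difficulty while omitting the two steps (the free presentation of $\Gamma_0^*(4)$ and the removal of the coboundary) that the argument actually turns on.
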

The main characteristic of the ``lift" induced by Theorem \ref{Cohintro} is that it is compatible, in the sense of eq. \eqref{e:lift}, with the $L$-series of the half-integral weight form and that of the corresponding integral weight forms. On the other hand, there does not seem to be any compatibility with the Hecke action, and the ``lifted'' forms are not explicitly given in terms of Fourier expansions, as was the case of the Shimura lift. One also notices that the weight of our ``lift" is half the weight of the Shimura lift. Because of those differences in their behaviour and the different problems they were each designed to resolve, it does not seem likely that our lift is related to the Shimura lift.

The lack of compatibility with the Hecke action was expected: The identity of Theorem \ref{Cohintro} expresses the ``critical" values of $L$-series of a half-integral weight cusp form directly in terms of $L$-values of \emph{integral} weight forms. If our lift were compatible with the Hecke action, then we could immediately deduce algebraicity results about the $L$-values of some half-integral weight Hecke eigenforms from the corresponding results in the integral weight case. However, algebraic properties so similar to those of the integral-weight $L$-values are not expected for $L$-series of half-integral weight forms. Therefore, additional input is required to derive algebraic information from the translation of half-integral weight $L$-values to integral weight $L$-values provided by Theorem \ref{Cohintro}. 

It was also mentioned above that our lift is not given explicitly via Fourier expansions. Nevertheless, it does have an explicit expression through the explicit inverse of the Eichler-Shimura map given in \cite{PP}. Theorem \ref{expl} allows us to obtain the integral weight lift of a given form $f$ of half-integral weight $k$ from $k-3/2$ values of the $L$-series associated with $f$. To formulate and prove this result we apply, in Section \ref{refo}, a method of explicitly determining the inverse of the Eichler-Shimura map on the full space of modular forms. This method may be of independent interest and it is based on a Haberland-type formula, proved in \cite{PP}, which applies to modular forms that are not necessarily cuspidal.

\section*{Data statement }
The authors confirm that this manuscript has no associated data.

\section*{Ethics declarations  }
The authors declare that there are no conflict of interests.

\section*{Acknowledgements}
The authors are grateful to A. Popa for his valuable feedback and a very careful reading of this work. They further thank V. Pasol and the anonymous referee for helpful comments. Part of the work was done while ND was visiting Max Planck Institute for Mathematics in Bonn and the University of Patras, both of whose hospitality he acknowledges. Research on this work is partially supported by the second author's EPSRC Grant EP/S032460/1. The third named author would like to thank the Center for Advanced Mathematical Sciences (CAMS) at the American University of Beirut for the support of the number theory unit. This work was supported by a grant from the Simons Foundation (853830, LR). The fourth author is grateful for support from a Dean’s Faculty Fellowship from Vanderbilt University, and to the Max Planck Institute for Mathematics for its hospitality and financial support.

\section{Terminology and notation}
We first fix the terminology and the notation we will be using. They will mostly be consistent with those of \cite{Sh} and \cite{KR}.

Let $k \in \frac12+\mathbb Z$ and $N \in 4 \mathbb N.$ We let $\left ( \frac{c}{d} \right )$ be the Kronecker symbol. For an odd integer $d$, we set 
\begin{equation}
\epsilon_d:=\begin{cases} 1 & \text{ if } d\equiv 1\bmod{4}, \\
i & \text{ if } d\equiv 3\bmod{4}, 
\end{cases} 
\end{equation}
so that $\epsilon_d^2 = \left(\frac{-1}{d}\right)$. 
We set the implied logarithm to equal its principal branch so that $-\pi <$arg$(z) \le \pi$. 
We define the action $|_k$ of $\Gamma_0(N)$ on smooth functions $f$ on $\HH$
as follows: 
\begin{equation}\label{e:slashk_halfint}
(f|_k\gamma)(z):= 
\left ( \frac{c}{d} \right ) \epsilon_d^{2k} (cz+d)^{-k} f(\gamma z) \qquad \text{ for all } \gamma=\begin{pmatrix} * & * \\  c & d \end{pmatrix}  \in \Gamma_0(N).
\end{equation}
Further, let
 $W_N = \begin{pmatrix} 0&-1/\sqrt{N}\\ \sqrt{N} & 0\end{pmatrix}$ and $\Gamma_0(N)^* = \langle W_N,\Gamma_0(N)\rangle.$  We set
\begin{equation}\label{e:slashk_halfintW}
(f|_kW_N)(z):= (-i\sqrt{N} z)^{-k} f(-1/(Nz)).
\end{equation}
We extend the action to $\mathbb C[\Gamma_0(N)^* ]$ by linearity. 

For $n \in \mathbb Z$ we let, as usual, 
\begin{equation}\label{e:slashk_int}
(f|_n\gamma)(z):= (cz+d)^{-n} f(\gamma z) \qquad \text{ for all } \gamma=\begin{pmatrix} * & * \\  c & d \end{pmatrix}  \in \SL_2(\mathbb R).
\end{equation}

Let $k \in \frac{1}{2}\mathbb Z$. If $\Gamma$ is either a subgroup of finite index in $\SL_2(\mathbb Z)$ or $\Gamma^*_0(N)$ for some $N$, and $\chi$ a character on $\Gamma$, We set 
\begin{equation}\label{e:slashk_chi}
f|_{k, \chi}\gamma:= \overline{\chi(\gamma)}f|_k \gamma \qquad \text{ for all } \gamma \in \Gamma.
\end{equation}
We will denote the space of modular (resp. cusp) forms of weight $k$ and character $\chi$ for $\Gamma$ by $M_k(\Gamma, \chi)$ (resp. $S_k(\Gamma, \chi)$). If $\chi$ is the trivial character, we write $M_k(\Gamma, \chi)$ (resp. $S_k(\Gamma)$).

For $f, g \in S_k(\Gamma, \chi)$, we define the Petersson scalar product as
$$(f, g)=\int_{\Gamma \backslash \mathfrak H} f(z)\overline{g(z)}y^k \frac{dxdy}{y^2} \quad \mbox{where} \, \, z=x+iy.$$

Let $\lambda$ be the width of the cusp $\infty$. For integral or half-integral weights $k$, we attach to 
$$f(z) = \sum_{n\geq 0}a_f(n)e^{\frac{2 \pi i nz}{\lambda}} \in M_k(\Gamma, \chi)$$ the \emph{$L$-series} 
$$L_f(s)=\sum_{n\geq 1} \frac{a_f(n)}{n^s}.$$
This is absolutely convergent for $\Re(s) \gg 1$ and can be meromorphically continued to the entire complex plane. Its ``completed'' version is
\begin{equation}\label{Lambda}
    	\Lambda(f, s) = \frac{\Gamma(s) \lambda^s}{(2\pi)^s}\sum_{n\geq 1} \frac{a_f(n)}{n^s} = \int_0^\infty (f(it)-a_f(0))t^s\frac{dt}{t}. 
\end{equation}
If $f$ is a cusp form $\Lambda$ is entire function and, if, further, $\chi$ trivial, it satisfies the functional equation 
\begin{equation}\label{e:FE}
\Lambda(f, s)=N^{\frac{k}{2}-s}\Lambda(f|_kW_N, k-s), \,\, \, \, \text{if $k \in \frac12+\mathbb Z$ and} \, \,
\Lambda(f, s)=i^k N^{\frac{k}{2}-s}\Lambda(f|_kW_N, k-s), \,\, \, \text{if $k \in \mathbb Z$.}
\end{equation}
We will also need the completed $L$-series of an integral weight form $f$ at the cusp $\gamma \infty$, for some $\g \in \text{SL}_2(\mathbb Z)$.
For $\Re(s) \gg 1$, 
\begin{equation}\label{Lambdacusp}
    	\Lambda(f|_k\g, s) := \int_0^\infty ((f|_k\g)(it)-a_{f|_k\g}(0))t^s\frac{dt}{t}
\end{equation}
where $a_{f|_n\g}(0)$ denotes the constant term of Fourier expansion of $f|_n\g$ at infinity.
Its meromorphic continuation to the entire plane is given by the equality
\begin{multline}\label{Lambdacuspdecom}
    	\Lambda(f|_k\g, s) = \int_{t_0}^\infty ((f|_k\g)(it)-a_{f|_k\g}(0))t^s\frac{dt}{t}+
i^k\int_{1/t_0}^\infty ((f|_k\g S)(it)-a_{f|_k\g S}(0))t^{k-s}\frac{dt}{t} \\-
a_{f|_k\g}(0)\frac{t_0^s}{s}-i^ka_{f|_k\g S}(0)\frac{t_0^{s-k}}{k-s}
\end{multline}
for any $t_0>0,$
where $
S:= \sm 0&-1\\ 1 & 0\esm.$

Finally, we set
$T: = \sm 1&1\\ 0 & 1\esm$ and $U:=TS= \sm 1&-1 \\ 1 & 0\esm.$
For $z, w$ not necessarily non-negative integers, set
$$\binom{z}{w}:=\frac{\Gamma(z+1)}{\Gamma(w+1)\Gamma(z-w+1)}.$$

\section{An analogue of the period polynomial}
Let $f$ be a cusp form of weight $k \in \frac12+\mathbb Z$ for $\Gamma_0(N)$ such that $f|_kW_N=f$. Fix $a\in[0,2k-9/2]$ and set
	\[
		P_a(z) := \int_0^{i\infty} f(w)\Phi_a(z,w)dw,
	\]
where
\begin{equation*}
	\Phi_a(z,w) :=\sum_{n=0}^{k-5/2} \left[\binom{k-2}{n}(iNz)^nw^{a-n}+\frac{i^k}{\sqrt[4]{N}}\binom{k-2}{n+1/2}(iz)^n(-Nw)^{2k-9/2-a-n}\right].
\end{equation*}
\begin{theorem}\label{Eich}	Let $k \in \frac{1}{2}+\mathbb Z$ with $k>5/2$. Suppose that $f\in S_k(\Gamma^*_0(N))$ and $a\in [0,2k-9/2]$. With the above notation, set
	\[
		F_a(z) := \int_z^{i\infty} f(w)\Phi_a(z,w)dw.
	\]
 \begin{enumerate}
     \item[(i)]
For $z\in\mathfrak{H}$ we have
\begin{equation*}
	-i^{k-5/2} F_a|_{5/2-k}W_N +F_a = P_a.
\end{equation*}
Therefore, if $\chi$ is a character on $\Gamma^*_0(N)$ such that $\chi(W_N)=i^{\frac{5}{2}-k}$, then
\begin{equation}\label{peri}
P_a|_{5/2-k, \chi}(W_N+1)=0.
\end{equation}
In particular, if $4|(k-5/2)$, then 
\begin{equation*}
P_a|_{5/2-k}(W_N+1)=0.
\end{equation*}

\item[(ii)] For each $z \in \mathbb C,$
\begin{eqnarray} \label{Lval}
	P_a(z) &=& i^{a+1}\sum_{n=0}^{k-5/2}\left[\binom{k-2}{n} N^n \Lambda(f, a+1-n)\right. \nonumber
 \\ \qquad &+& \left. \binom{k-2}{n+1/2} N^{2k-a-n-\frac{19}{4}} i^{2n-k+\frac{1}{2}} \Lambda(f, 2k-7/2-a-n) \right]z^n.
\end{eqnarray}
 \end{enumerate}
\end{theorem}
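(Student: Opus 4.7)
The plan for part (i) is to reduce the functional equation to a symmetry identity for $\Phi_a$. First I would rewrite $F_a(z) - P_a(z) = -\int_0^z f(w)\Phi_a(z,w)\,dw$, and then express the right-hand side in terms of $F_a|_{5/2-k}W_N$. Since $5/2-k$ is an integer, the slash action is simply $(F_a|_{5/2-k}W_N)(z) = N^{(k-5/2)/2}z^{k-5/2}F_a(-1/(Nz))$. Substituting $w\mapsto -1/(Nu)$ in the integral for $F_a(-1/(Nz))$, applying $f|_kW_N = f$ in the form $f(-1/(Nu)) = N^{k/2}(-i)^k u^k f(u)$, and collecting scalar factors produces an integral of the form $\int_0^z f(u)\cdot[\text{scalar}(z,u)]\cdot\Phi_a(-1/(Nz),-1/(Nu))\,du$.

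The heart of the proof is then the identity
\[
N^{k-9/4}z^{k-5/2}u^{k-2}\,\Phi_a\!\left(-\tfrac{1}{Nz},-\tfrac{1}{Nu}\right) = i^{5/2}\,\Phi_a(z,u),
\]
which I would establish termwise. To prove it, I would expand $\Phi_a$ at the transformed arguments using the principal-branch identities $(1/u)^s = u^{-s}$, $(-1/(Nu))^{a-n} = N^{n-a}u^{n-a}e^{i\pi(a-n)}$, and $(-Nu)^s = N^s u^s e^{-is\pi}$, valid for $u\in\mathfrak{H}$. The reindexing $n\mapsto k-5/2-n$ swaps the structural form of the two summands (i.e.\ exponent patterns $w^{a-n}$ and $w^{2k-9/2-a-n}$), while the binomial identity $\binom{k-2}{k-5/2-n}=\binom{k-2}{n+1/2}$ (from $\binom{z}{w}=\binom{z}{z-w}$) matches the coefficients; the accumulated phases collapse to the single constant $i^{5/2}$. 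I expect this phase bookkeeping to be the main obstacle, since $a-n$ and $2k-9/2-a-n$ can be half-integers and principal-branch conventions must be tracked meticulously throughout.

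With the symmetry in hand, the chain $-i^{k-5/2}F_a|_{5/2-k}W_N = i^{k-5/2}(-i)^k\cdot i^{5/2}\int_0^z f(u)\Phi_a(z,u)\,du = \int_0^z f(u)\Phi_a(z,u)\,du = P_a - F_a$, using $i^{k-5/2}(-i)^k = i^{-5/2}$ and $i^{-5/2}\cdot i^{5/2}=1$, yields the functional equation. The period relation then follows by applying $|_{5/2-k,\chi}W_N$ to both sides: using $W_N^2 = -I$ (which acts as $(-1)^{k-5/2}$ under the integer slash), $\overline{\chi(W_N)} = i^{k-5/2}$, and the identity $i^{2(k-5/2)}(-1)^{k-5/2}=1$, a short rearrangement gives $P_a|_{5/2-k,\chi}W_N = -P_a$. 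For the trivial character, $\chi(W_N)=i^{5/2-k}$ forces $4\mid(k-5/2)$, producing the final statement.

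For part (ii) I would substitute $w = it$ directly in the integral defining $P_a$. The first type of summand yields $i^{a+1}\binom{k-2}{n}N^nz^n\Lambda_f(a+1-n)$ via $(it)^s = i^s t^s$ and the Mellin representation $\Lambda_f(s) = \int_0^\infty f(it)t^{s-1}\,dt$. The second type, involving $(-iNt)^{2k-9/2-a-n}$ for $t>0$, is handled using the principal-branch identity $(-i)^s = i^{-s}$; after simplifying $i$-powers modulo $4$, one recovers the stated coefficient $i^{a+1}\cdot i^{2n-k+1/2}$. This part is calculation-heavy but conceptually routine compared to (i).
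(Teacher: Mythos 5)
Your proposal is correct and follows essentially the same route as the paper: both reduce part (i) to the termwise symmetry of $\Phi_a$ under $W_N$ in each variable (your identity $N^{k-9/4}z^{k-5/2}u^{k-2}\Phi_a(-1/(Nz),-1/(Nu))=i^{5/2}\Phi_a(z,u)$ is exactly the paper's \eqref{transf} with the prefactor $-(i\sqrt{N}z)^{k-5/2}(-i\sqrt{N}w)^{k-2}$ written out, since $-e^{-i\pi/4}=i^{-5/2}$), proved by the reindexing $n\mapsto k-\frac52-n$ together with $\binom{k-2}{k-5/2-n}=\binom{k-2}{n+1/2}$ and principal-branch bookkeeping, and then conclude via the substitution $w\mapsto W_Nw$ in the integral and the relation $P_a=F_a|_{\frac52-k,\chi}(1-W_N)$, with part (ii) obtained by setting $w=it$ and invoking \eqref{Lambda}. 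The only differences are cosmetic (explicit scalars versus bundled factors, and phrasing the period relation as $P_a|_{5/2-k,\chi}W_N=-P_a$ rather than via $(1-W_N)(1+W_N)=1-W_N^2$).
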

\begin{proof}
We first show that
\begin{equation}\label{transf}
	-(i\sqrt{N}z)^{k-5/2}(-i\sqrt{N}w)^{k-2}\Phi_a(W_Nz,W_Nw) = \Phi_a(z,w).
\end{equation}
Indeed, the left-hand side is
\begin{eqnarray*}
		-(i\sqrt{N} z)^{k-5/2}(-i\sqrt{N}w)^{k-2}\sum_{n=0}^{k-5/2}\left[\binom{k-2}{n}(iz)^{-n}(-Nw)^{n-a}\right. \\ \left.+ \frac{i^k}{\sqrt[4]{N}}\binom{k-2}{n+1/2} (iNz)^{-n} w^{n-2k+9/2+a}\right].
\end{eqnarray*}
Observe that
	\[
		\binom{k-2}{k-5/2-n}=\binom{k-2}{n+1/2},
	\]
so that the change of variables $n\mapsto k-5/2-n$ yields
\begin{eqnarray}\label{pretransf}
-(i\sqrt{N}z)^{k-5/2}(-i\sqrt{N}w)^{k-2}\sum_{n=0}^{k-5/2} \left[\binom{k-2}{n+1/2}(iz)^{n-k+5/2}(-Nw)^{k-5/2-n-a}\right. \\ \left. + \frac{i^k}{\sqrt[4]{N}}\binom{k-2}{n}(iNz)^{n-k+5/2}w^{-k+2-n+a}\right].
\end{eqnarray}
Since $w \in \mathfrak H$, we have, for every $t\in\mathbb{R}$,
$(-w)^t = e^{-\pi i t} w^t.$
A routine computation shows
\[
	-(i\sqrt{N}z)^{k-5/2}(-i\sqrt{N}w)^{k-2} (iz)^{n-k+5/2}(-Nw)^{k-5/2-n-a} = \frac{i^k}{\sqrt[4]{N}}(iz)^n (-Nw)^{2k-9/2-a-n}
\]
and similarly:
\[
	-(i\sqrt{N}z)^{k-5/2}(-i\sqrt{N}w)^{k-2} \frac{i^k}{\sqrt[4]{N}} (iN z)^{n-k+5/2}w^{2-k-n+a}=(iNz)^n w^{a-n}.
\]
Therefore, \eqref{pretransf} equals $\Phi_a(z, w)$, establishing \eqref{transf}. 

With \eqref{transf} we now have:
\begin{eqnarray}\label{action}
	i^{k-5/2} (F_a|_{5/2-k} W_N)(z) &=& (i\sqrt{N} z)^{k-5/2}\int_{W_Nz}^{i\infty} f(w)\Phi_a(W_N z,w)dw \nonumber\\
	&=& (i\sqrt{N}z)^{k-5/2} \int_z^0 f(W_Nw)\Phi_a(W_Nz,W_Nw) d(W_Nw) \nonumber\\
	&=& -(i\sqrt{N} z)^{k-5/2} \int_z^0 f(w)(-i\sqrt{N}w)^k \Phi_a(W_Nz,W_Nw)\frac{dw}{(-i\sqrt{N}w)^2} \nonumber\\
	&=& \int_z^0 f(w)\Phi_a(z,w)dw=F_a(z)-P_a(z).
\end{eqnarray}

To prove \eqref{peri}, we have from \eqref{action},
$$P_a|_{\frac{5}{2}-k, \chi}(1+W_N)=F_a|_{\frac{5}{2}-k, \chi}(1-W_N)|_{\frac{5}{2}-k, \chi}(1+W_N)=F_a|_{\frac{5}{2}-k, \chi}(1-W_N^2)=0.
$$

To show \eqref{Lval}, we expand the defining expression for $P_a$ and use the integral formula for $\Lambda(f, s)$ in \eqref{Lambda}. 
\end{proof}

This theorem and \eqref{e:FE} show that $P_a$ can be thought of as a ``period polynomial" encoding the $L$-values of $f$ at $a+1,a,a-1,\cdots, a-k+7/2$. Among the various choices of $a$, the most ``canonical" is $a=k-2$ because then, our ``period polynomial" $P_a$ becomes entirely consistent with the Eichler cohomology attached to general weight cusp forms, as in \cite{BCD}. Specifically, the Eichler cocycle on which that cohomology is based is induced by the assignment (Section 2.2 of \cite{BCD}) to $f \in S_k(\Gamma)$ of the map
$$ \Gamma \ni \gamma \longrightarrow \psi_{f, \gamma}^{\infty}(z):=\int_{\gamma^{-1}i\infty}^{i \infty} f(w)(w-z)^{k-2}dw$$
where $\psi_{f, \gamma}^{\infty}$ is defined on the \emph{lower} half-plane $\bar {\mathfrak H}$. 
For $\gamma=W_N$, the integral giving $\psi_{f, W_N}^{\infty}\left (x \right )$ is well-defined for $x>0$ as well. We then have the following relation between our ``period polynomial" $P_{k-2}$ and $\psi_{f, W_N}^{\infty}\left (x \right )$. 
\begin{proposition}\label{Brug}
For each $x>1$, we have 
$$P_{k-2}(ix)= \psi_{f, W_N}^{\infty}\left (Nx \right )-i^{1-2k}\psi_{f, W_N}^{\infty}\left (1/x \right )(\sqrt{N} x)^{k-\frac{5}{2}}+O(x^{k-\frac32}).
$$
\end{proposition}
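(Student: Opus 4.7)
The plan is to decompose $\Phi_{k-2}(ix,w) = S_1(w) + S_2(w)$ into its $\binom{k-2}{n}$ and $\binom{k-2}{n+1/2}$ halves, recognize each as a truncated Taylor series, and ultimately convert the main contribution involving $\psi_{f,W_N}^{\infty}(-x/N)$ into one involving $\psi_{f,W_N}^{\infty}(1/x)$ via the $W_N$-invariance of $f$. Observe first that $S_1(w) = \sum_{n=0}^{k-5/2}\binom{k-2}{n}(-Nx)^n w^{k-2-n}$ is precisely the order-$(k-\tfrac32)$ Taylor polynomial of $c\mapsto(w-c)^{k-2}$ centered at $c=0$, evaluated at $c=Nx$. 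Taylor's theorem with the integral form of the remainder then yields
$$(w-Nx)^{k-2} = S_1(w) + C\int_0^{Nx}(Nx-s)^{k-5/2}(w-s)^{-1/2}\,ds$$
for an explicit constant $C$. The auxiliary integral $\Psi(s):=\int_0^{i\infty}f(w)(w-s)^{-1/2}dw$ satisfies $|\Psi(s)|\leq C'\min(1,s^{-1/2})$ by the cuspidal decay of $f$, and a direct estimate gives $\int_0^{i\infty}f(w)[(w-Nx)^{k-2}-S_1(w)]\,dw = O(x^{k-2})$, which is absorbed into $O(x^{k-3/2})$ for $x>1$.

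For $S_2$, the symmetry $\binom{k-2}{n+\tfrac12}=\binom{k-2}{k-\tfrac52-n}$ and the substitution $m=k-\tfrac52-n$ give
$$S_2(w) = \frac{i^k(-1)^{k-5/2}}{N^{1/4}}\,x^{k-5/2}\sum_{m=0}^{k-5/2}\binom{k-2}{m}\left(\frac{Nw}{x}\right)^m,$$
in which the sum is the order-$(k-\tfrac32)$ Taylor polynomial of $(1+Nw/x)^{k-2}$. Since $w\in i\mathbb R_{>0}$ and $x>0$, the principal-branch identity $x^{k-5/2}(1+Nw/x)^{k-2}=N^{k-2}x^{-1/2}(w+x/N)^{k-2}$ holds, so the ``completed'' series, integrated against $f$, contributes $i^k(-1)^{k-5/2}N^{k-9/4}x^{-1/2}\psi_{f,W_N}^{\infty}(-x/N)$, while the Taylor remainder contributes $O(x^{-1})$ after integration against $f$. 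Combining the two halves,
$$P_{k-2}(ix) = \psi_{f,W_N}^{\infty}(Nx) + i^k(-1)^{k-5/2}N^{k-9/4}x^{-1/2}\psi_{f,W_N}^{\infty}(-x/N) + O(x^{k-3/2}).$$

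Finally, the change of variables $w\mapsto W_Nw$ in $\psi_{f,W_N}^{\infty}(z)=\int_0^{i\infty}f(w)(w-z)^{k-2}dw$, combined with $f|_kW_N=f$, yields the functional equation $\psi_{f,W_N}^{\infty}(-x/N) = N^{1-k/2}i^{2-k}x^{k-2}\psi_{f,W_N}^{\infty}(1/x)$. Substituting this and applying $(-1)^{k-5/2}=i^{1-2k}$ (which holds since $k-\tfrac52$ and $k-\tfrac12$ have the same parity, so $(-1)^{k-5/2}=(-1)^{k-1/2}=i^{-2(k-1/2)}$) converts the middle term into precisely $-i^{1-2k}(\sqrt{N}x)^{k-5/2}\psi_{f,W_N}^{\infty}(1/x)$, as required. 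The main obstacle throughout is the sustained bookkeeping of principal-branch half-integer powers: every identification of the form $(AB)^{k-2}=A^{k-2}B^{k-2}$ must be verified from the sign conditions $w\in i\mathbb R_{>0}$, $x>0$, and any slip in these phases would corrupt the final constant $i^{1-2k}$.
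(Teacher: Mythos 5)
Your proof is correct, and it is essentially the paper's argument run in the opposite direction. The paper starts from the right-hand side, writes both $\psi$-terms as a single integral over the imaginary axis, Taylor-expands $(1+iNxt^{-1})^{k-2}$ and $(1+ix^{-1}t^{-1})^{k-2}$ to order $M=k-\tfrac32$, and then matches the resulting polynomial in $x$ against the closed formula \eqref{Lval} for $P_{k-2}$ via the index shift $n\mapsto k-\tfrac52-n$ and the completed functional equation \eqref{e:FE}. You instead start from the integral definition $P_{k-2}(ix)=\int_0^{i\infty}f(w)\Phi_{k-2}(ix,w)\,dw$, complete the two truncated binomial sums in $\Phi_{k-2}$ to the full powers $(w-Nx)^{k-2}$ and $(w+x/N)^{k-2}$ with controlled Taylor remainders, and prove the needed functional equation directly at the level of $\psi_{f,W_N}^{\infty}$ by the substitution $w\mapsto W_Nw$ together with $f|_kW_N=f$ (which is exactly how \eqref{e:FE} is derived). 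The key ingredients --- the order-$(k-\tfrac32)$ Taylor truncation of the binomial and the Fricke invariance of $f$ --- are the same in both arguments; what your route buys is that it never invokes the $L$-value expression \eqref{Lval}, and your remainder bookkeeping (integral form of the remainder in the shift variable $s$, with $|\Psi(s)|\ll\min(1,s^{-1/2})$) yields the slightly sharper total error $O(x^{k-2})+O(x^{-1})$, whereas the paper's bound $|R_M(u)|\ll|u|^{M}$ gives only $O(x^{k-3/2})$. The branch verifications you flag do all go through (each product keeps the argument in $(0,\pi)$), and the final phase identity $(-1)^{k-5/2}=i^{1-2k}$ holds because $i^{4k-6}=1$ for $k\in\tfrac12+\mathbb Z$.
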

\begin{proof}We first note that
\begin{multline}
\psi_{f, W_N}^{\infty}\left (Nx \right )-i^{1-2k}\psi_{f, W_N}^{\infty}\left (1/x \right )(\sqrt{N} x)^{k-\frac{5}{2}} \\=
i^{k-1}\int_0^{\infty} f(it) \left ( (1+iNxt^{-1})^{k-2} -i^{1-2k}(1+ix^{-1}t^{-1})^{k-2} (\sqrt{N} x)^{k-\frac{5}{2}}\right ) t^{k-2}dt.
\end{multline}
By Taylor's formula we have, for each $M \in \mathbb N$, 
\begin{multline*}
    (1+iNxt^{-1})^{k-2}= \\ \sum_{n=0}^{M-1} \binom{k-2}{n}(iNxt^{-1})^n+O_M\left (\int_0^1(1-y)^{M-1}(1+iNyxt^{-1})^{k-2-M} (iNxt^{-1})^M dy\right ).\end{multline*}
If $M>k-2$ the error term is $O_M((xt^{-1})^M)$. Likewise, again if $M>k-2$, the error term of $(1+ix^{-1}t^{-1})^{k-2}$ is $O_M((x^{-1}t^{-1})^M)$.  Therefore for $M=k-\frac32$, we deduce, for $x>1$,
\begin{multline}
\psi_{f, W_N}^{\infty}\left (Nx \right )-i^{1-2k}\psi_{f, W_N}^{\infty}\left (1/x \right )(\sqrt{N} x)^{k-\frac{5}{2}} \\=
i^{k-1}\sum_{n=0}^{k-\frac{5}{2}} \binom{k-2}{n} \left ((iNx)^n-i^{1-2k} (ix^{-1})^n (\sqrt{N} x)^{k-\frac{5}{2}} \right ) \Lambda(f, k-1-n)+O(x^{k-\frac{3}{2}})
\end{multline}
where the implied constant is independent of $x$. The change of variables $n \to k-\frac{5}{2}-n$, followed by \eqref{e:FE} in the second sum, implies the result.
\end{proof}

\subsection{Comparison with the period function of \cite{KR}}
Another function encoding special values of $L_f$ is given in \cite{KR}. The result in \cite{KR} is stated for cusp forms on Hecke groups, but, thanks to the embedding of $S_k(\Gamma^*_0(N))$ into a space of cusp forms for Hecke groups ((8.1) of \cite{KR}) it can be formulated for the modular forms studied in Theorem \ref{Eich}: 
\begin{proposition} {\bf \cite{KR}} Let $k \in \frac12+\mathbb Z$ and $f(z)=\sum a_f(n)e^{2 \pi i n z} \in S_k(\Gamma_0(N))$ such that $f|_kW_N=f$. For each $z \in \HH$ 
set
$$\mathcal E^*_f(z)=\frac{1}{\sqrt{\pi}}\sum_{n \ge 1} \frac{a_f(n)}{n^{k-1}}\left ( e^{-2\pi i n z} \Gamma \left (\frac12, -2\pi i n z \right )-\frac{1}{\sqrt{-2\pi i n z}} \right ).$$
Then, for all $z \in \mathfrak H,$
\begin{equation}\label{KREich}\left (\mathcal E^*_f|_{2-k}(1-W_N)\right ) (z)=\sum_{n=0}^{k-\frac{3}{2}} \left (\frac{L_f(k-n-1)}{\Gamma(n+1)}+ \frac{L_f(k-n-\frac{1}{2})}{\Gamma(n+\frac{1}{2})}\left  (\frac{2\pi z}{i}\right )^{-\frac12}\right ) \left  (\frac{2\pi z}{i}\right )^n.\end{equation}
\end{proposition}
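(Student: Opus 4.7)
The plan is to convert $\mathcal{E}^*_f$ into an Eichler-type integral of $f$, compute the $W_N$-action via a contour substitution using $f|_kW_N=f$, and then identify the coefficients of the resulting expansion with $L$-values via Mellin transform.

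First, I use the standard integral representation $\Gamma(\tfrac12,w)=e^{-w}\int_0^\infty e^{-t}(t+w)^{-1/2}dt$ at $w=-2\pi i n z$, followed by the substitution $t=-2\pi i n(u-z)$, to rewrite
$$e^{-2\pi i n z}\Gamma\!\left(\tfrac12,-2\pi i n z\right)=\sqrt{-2\pi i n}\int_z^{i\infty}\frac{e^{2\pi i n(u-z)}}{\sqrt{u}}\,du,$$
with the principal branch of the square root. Multiplying by $a_f(n)/n^{k-1}$ and summing in $n$---the interchange is justified by the rapid decay of $f$ at $i\infty$---recasts $\mathcal{E}^*_f(z)$ as a modified Eichler integral: the integral term pairs the $(k-\tfrac32)$-fold antiderivative of $f$ (the one vanishing at $i\infty$) against the kernel $u^{-1/2}$, while the subtracted piece $1/\sqrt{-2\pi i n z}$ accumulates to $L_f(k-\tfrac12)/(\sqrt{\pi}\sqrt{-2\pi i z})$ and is precisely the counterterm that will later cancel the boundary contribution at $u=0$.

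Second, I apply the slash action $|_{2-k}W_N$. Substituting $u\mapsto-1/(Nu)$ in the Eichler integral for $\mathcal{E}^*_f(-1/(Nz))$ converts the contour $[-1/(Nz),i\infty]$ into $[0,z]$, and the invariance $f|_kW_N=f$ replaces $f(-1/(Nu))$ by $(-i\sqrt{N}u)^{k}f(u)$; after simplification the integrand once again has the shape $f(u)u^{-1/2}$ times a kernel depending on $z$. Forming the difference $\mathcal{E}^*_f-\mathcal{E}^*_f|_{2-k}W_N$ concatenates the two segments into a single integral over $[0,i\infty]$, with the $1/\sqrt{-2\pi i n z}$-counterterm cancelling the divergence at $u=0$ as designed.

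Third, on the full contour $[0,i\infty]$ the $z$-dependence sits entirely in the Taylor-expandable factors $e^{-2\pi i n z}$ and in the rational/algebraic kernel produced by $W_N$. Expanding these in $z$ up to order $k-\tfrac32$ yields Mellin integrals $\int_0^\infty f(it)t^s\,dt/t=\Lambda_f(s)$ at the shifts $s=k-1-n$ and $s=k-\tfrac12-n$; the contributions beyond the cutoff vanish thanks to \eqref{e:FE}. Using $\Lambda_f(s)=\Gamma(s)(2\pi)^{-s}L_f(s)$ and collecting the resulting factors of $(2\pi z/i)^n$ and $(2\pi z/i)^{n-1/2}$ reproduces the right-hand side of \eqref{KREich}.

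The main obstacle is the careful bookkeeping of the half-integer branches of $\sqrt{-2\pi i n z}$, $\sqrt{u}$, $\sqrt{-2\pi i n}$, and $(2\pi z/i)^{-1/2}$ across the substitution $u\mapsto -1/(Nu)$, together with the verification that the two boundary contributions at $u=0$ cancel exactly after summation in $n$. These points are settled in \cite{KR} using the conventions recalled in Section~2; the transfer of their Hecke-group statement to $S_k(\Gamma^*_0(N))$ is then immediate via the embedding recorded in (8.1) of that reference.
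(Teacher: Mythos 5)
The paper offers no proof of this proposition: it is imported verbatim from \cite{KR}, the only added content being the surrounding remark that the Hecke-group statement of \cite{KR} transfers to $S_k(\Gamma^*_0(N))$ via the embedding (8.1) of that reference. So there is no internal argument to compare yours against, and your closing paragraph --- which defers the branch bookkeeping and the boundary cancellation to \cite{KR} and invokes that same embedding --- in effect reproduces exactly what the paper does. Your first step is sound and is essentially the computation the authors themselves perform in the proposition that immediately follows this one: the identity $e^{-2\pi i n z}\Gamma(\tfrac12,-2\pi i n z)=\sqrt{-2\pi i n}\int_z^{i\infty}e^{2\pi i n(u-z)}u^{-1/2}\,du$ is correct (up to the branch conventions you flag), and the counterterms do sum to $L_f(k-\tfrac12)/(\sqrt{\pi}\sqrt{-2\pi i z})$.

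Judged as a standalone proof, however, your third step has a genuine gap at the decisive point. The claim that ``the contributions beyond the cutoff vanish thanks to \eqref{e:FE}'' conflates two very different phenomena. For the $\Gamma(n+\tfrac12)$-part of the right-hand side of \eqref{KREich} the truncation at $n=k-\tfrac32$ is indeed automatic: $L_f(k-n-\tfrac12)=(2\pi/\lambda)^{k-n-1/2}\Lambda_f(k-n-\tfrac12)/\Gamma(k-n-\tfrac12)$ vanishes for integers $n\ge k-\tfrac12$ because of the poles of $\Gamma$ at non-positive integers. But for the $\Gamma(n+1)$-part the arguments $k-n-1$ are half-integers, so $L_f$ has no trivial zeros there and the corresponding binomial factors never vanish; moreover there is no Bol-type operator $D^{k-1}$ in half-integral weight that would force the expansion of $\mathcal E^*_f|_{2-k}(1-W_N)$ to terminate, which is precisely why this result is nontrivial. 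The finiteness of that sum requires showing that the higher-order coefficients produced by $\mathcal E^*_f$ and by $\mathcal E^*_f|_{2-k}W_N$ cancel exactly --- the functional equation \eqref{e:FE} is the right tool for organizing that cancellation, but it does not make individual terms vanish, and the cancellation itself is the substance of the theorem in \cite{KR}. As written, your proposal is an acceptable gloss on a quoted result, not a self-contained proof of it.
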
 
The values of $L_f(s)$ at 
$k-1, \dots \frac32$ appearing in the right-hand side of \eqref{KREich} are also encoded by $P_{k-2}$. However, the function on the right-hand side of \eqref{KREich} does not belong to a finite-dimensional space closed under the action of the group. Further, the ``Eichler integral" 
$\mathcal E^*_f(z)$ is defined as a series. To complete the comparison of our construction with the ``period function" of \cite{KR}, we show how the main piece of $\mathcal E^*_f(z)$ can nevertheless be expressed as an integral too. 

\begin{proposition} With the notation of Theorem \eqref{Eich}, for each $z \in \mathfrak H$,
$$\mathcal E^*_f(z)=\alpha_k (-iz)^{\frac12} \int_{z}^{i \infty} F_f(w)
(w-z)^{k-\frac52} dw-\frac{1}{\sqrt{-2 \pi^2 i z}}L_f\left (k-\frac12\right )$$
where $$F_f(w):= \int_0^{\infty} f\left (xw\right )x^{k-\frac32}(x+1)^{-\frac12} dx $$
and $\alpha_k=(-2 \pi i)^{k-1}/ (\pi^{\frac12}(k-\frac52)!).$
\end{proposition}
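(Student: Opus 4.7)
The plan is to compute both sides by reducing them to the same Dirichlet-like series weighted by an integral over $[0,\infty)$, and then matching coefficients.

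First I would rewrite the incomplete gamma factor in the definition of $\mathcal E^*_f$. Substituting $t = -2\pi i n z s$ in $\Gamma(\tfrac12,-2\pi inz) = \int_{-2\pi inz}^{\infty}e^{-t}t^{-1/2}dt$ gives
$$\Gamma(\tfrac12,-2\pi inz) = (-2\pi inz)^{1/2}\int_1^\infty e^{2\pi inzs}s^{-1/2}ds,$$
after which the substitution $r=s-1$ yields $e^{-2\pi inz}\Gamma(\tfrac12,-2\pi inz) = (-2\pi inz)^{1/2}\int_0^\infty e^{2\pi inzr}(r+1)^{-1/2}dr$. Next, use the multiplicativity $(-2\pi inz)^{\pm 1/2} = n^{\pm 1/2}(-2\pi iz)^{\pm 1/2}$ for $n>0$ (principal branch) to pull the $n$-dependence out of the square roots. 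Summing over $n$ against $a_f(n)n^{1-k}/\sqrt\pi$ cleanly splits $\mathcal E^*_f(z)$ into a ``main term''
$$\sqrt 2\,(-iz)^{1/2}\sum_{n\ge 1}\frac{a_f(n)}{n^{k-3/2}}\int_0^\infty e^{2\pi inzr}(r+1)^{-1/2}dr$$
and a ``boundary term'' $\frac{1}{\sqrt \pi(-2\pi iz)^{1/2}}L_f(k-\tfrac12) = \frac{1}{\sqrt{-2\pi^2 iz}}L_f(k-\tfrac12)$, which is exactly the subtracted piece in the claimed identity.

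It remains to identify the main term with $\alpha_k(-iz)^{1/2}\int_z^{i\infty}F_f(w)(w-z)^{k-5/2}dw$. I would expand $f(xw)=\sum_m a_f(m)e^{2\pi i m xw}$ in the definition of $F_f$, then interchange the order of integration (justified by absolute convergence thanks to the cuspidal decay of $f$ and the factor $(x+1)^{-1/2}$). This reduces the task to evaluating, for each $m$, the inner integral
$$\int_z^{i\infty}e^{2\pi i mxw}(w-z)^{k-5/2}dw.$$
Setting $t=w-z$, deforming the contour to the ray $t=iu/(2\pi mx)$ (valid since the integrand decays in the upper half-plane), and applying the definition of $\Gamma(k-\tfrac32)$ produces
$$e^{2\pi imxz}\,\frac{i^{k-3/2}\Gamma(k-3/2)}{(2\pi mx)^{k-3/2}}.$$
The $x^{k-3/2}$ cancels the $x^{-(k-3/2)}$, leaving
$$\int_z^{i\infty}F_f(w)(w-z)^{k-5/2}dw = \frac{i^{k-3/2}\Gamma(k-3/2)}{(2\pi)^{k-3/2}}\sum_{m\ge 1}\frac{a_f(m)}{m^{k-3/2}}\int_0^\infty (x+1)^{-1/2}e^{2\pi imxz}dx,$$
which, compared to the main term above, forces the prefactor $\alpha_k$ to be exactly the constant in the statement (using $(k-5/2)!=\Gamma(k-3/2)$ and $(-2\pi i)^{k-1}=(2\pi)^{k-1}(-i)^{k-1}$, together with the branch identity $\sqrt 2(2\pi)^{k-3/2}/i^{k-3/2} = (-2\pi i)^{k-1}/\sqrt\pi$).

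I expect the main obstacles to be bookkeeping rather than conceptual. The two serious points are: (a) justifying the two interchanges of summation and integration (first for $F_f$ inside the outer integral, second inside $\mathcal E^*_f$), which one handles by dominated convergence using $|a_f(n)|=O(n^{k/2})$ and the fast decay from $\Im(zw)$ large; and (b) the contour deformation for $\int_z^{i\infty}e^{2\pi imxw}(w-z)^{k-5/2}dw$ together with the tracking of the branches of $t^{k-5/2}$, $(-iz)^{1/2}$, and $(-2\pi iz)^{1/2}$ through the chain of substitutions. Both are routine but must be done carefully so that the final constants (in particular the $i$-power inside $\alpha_k$) line up without ambiguity.
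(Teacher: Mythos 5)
Your argument follows essentially the same route as the paper's: the integral representation $\Gamma(\tfrac12,w)=w^{1/2}\int_1^\infty e^{-wt}t^{-1/2}\,dt$, the splitting off of the $L_f(k-\tfrac12)$ term, the identification of the inner Dirichlet series $\sum_n a_f(n)n^{-(k-3/2)}e^{2\pi i n\tau}$ with the classical Eichler integral $\int_\tau^{i\infty}f(w)(w-\tau)^{k-5/2}dw$ up to the constant $(-2\pi i)^{k-3/2}/\Gamma(k-\tfrac32)$, and an interchange of the order of integration; the paper simply runs the chain of equalities in one direction (transforming $\mathcal E^*_f$ via the substitutions $x=t-1$, $w=xw_1$) whereas you expand both sides and meet in the middle, which is a cosmetic difference. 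The one point that is genuinely wrong in your write-up is the final ``branch identity'' $\sqrt 2\,(2\pi)^{k-3/2}/i^{k-3/2}=(-2\pi i)^{k-1}/\sqrt\pi$: with principal branches the left side equals $e^{i\pi/4}$ times the right side, so your (otherwise correct) computation actually yields
\[
\mathcal E^*_f(z)=\alpha_k\, z^{\frac12}\int_z^{i\infty}F_f(w)(w-z)^{k-\frac52}\,dw-\frac{1}{\sqrt{-2\pi^2iz}}L_f\!\left(k-\tfrac12\right),
\]
with $z^{1/2}=i^{1/2}(-iz)^{1/2}$ in place of the stated $(-iz)^{1/2}$. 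You should not paper over this with a false identity; in fact the paper's own proof ends with exactly the prefactor $\alpha_k z^{1/2}$, so the factor $e^{i\pi/4}$ you are missing is precisely the discrepancy between the proposition as stated and the proof given in the paper. In short: your derivation is sound and reproduces the paper's computation, but the last line asserting that the constants ``line up'' with $(-iz)^{1/2}$ is incorrect as written, and the honest conclusion is the displayed formula above.
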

\begin{proof} We first recall (\cite{NIST}, (8.19.1), (8.19.3)) that, for Re$(w)>0,$
$$\Gamma \left (\frac12, w \right )=w^{\frac12}\int_1^{\infty}e^{-wt} t^{-\frac12} dt.$$
Therefore, for $z \in \mathfrak H,$
\begin{multline*} \sum_{n \ge 1} \frac{a_f(n)}{n^{k-1}}\frac{e^{-2\pi i n z }}{\sqrt{\pi}} \Gamma \left (\frac12, -2\pi i n z \right )=
\frac{1}{\sqrt{\pi}} \sum_{n \ge 1} \frac{a_f(n)}{n^{k-1}} e^{-2\pi i n z } \left (-2\pi i n z \right)^{\frac12}\int_1^{\infty} e^{2\pi i n zt }t^{-\frac12} dt\\
=(-2 i z)^{\frac12} \int_1^{\infty}t^{-\frac12} \left ( \sum_{n \ge 1} \frac{a_f(n)}{n^{k-\frac32}} e^{2 \pi i n z(t-1)} \right ) dt.
\end{multline*}
By the theory of usual (integral weight) Eichler integrals, followed by the changes of variables $x=t-1$ and $w_1=w/x$, this equals
\begin{multline*} (-2 i z)^{\frac12} \int_1^{\infty}t^{-\frac12} \frac{(-2 \pi i)^{k-\frac32}}{\Gamma(k-\frac32)}
\int_{z(t-1)}^{i \infty} f(w) \left (w-z(t-1)\right )^{k-\frac52} dwdt\\
\\=
\alpha_k z^{\frac12}
 \int_0^{\infty}x^{k-\frac32} (x+1)^{-\frac12} \int_{z}^{i \infty} f(xw_1)\left (w_1-z \right )^{k-\frac52} dw_1 dx.
\end{multline*}
A change in the order of integration implies the formula.
\end{proof}

\section{An explicit lift to spaces of integral weight}\label{refo}
In this section, we will first use Theorem \ref{Eich} to construct an Eichler cocycle, with coefficients in the space $\mathbb{C}_{k-5/2}[z]$. We maintain the notation and assumptions of the last section. 

Since $4|N$, $\Gamma_0(N)/\{\pm 1\}$ 
is torsion-free and hence free on a set of generators $\{\gamma_j\}_{j=1}^{2g+h-1}$, where $\gamma_1=T$, $g$ is the genus and $h$ the number of inequivalent cusps of $\Gamma_0(N)$. (\cite{Iw}, Prop. 2.4; here by $T, \gamma_j$ etc. we mean the images of those elements of $\Gamma_0(N)$ into $\Gamma_0(N)/\{\pm 1\}$), We also have $W_N\Gamma_0(N)=\Gamma_0(N)W_N$ and thus $\Gamma^*_0(N)=\langle \Gamma_0(N), W_N \rangle$ is generated by $\{\gamma_j\} \cup \{W_N\}$ with only relations $(-1)^2=1$ and $W_N^4=1$. 

From the above, we first deduce that there is always a character $\chi$ on $\Gamma^*_0(N)$ such as prescribed in Theorem \ref{Eich}, that is such that $\chi(W_N)=i^{5/2-k}$. Indeed, since $i^{4(5/2-k)}=1$, the character induced by the assignment $\chi(-1)=(-1)^{5/2-k}$, $\chi(W_N)=i^{5/2-k}$ and $\chi(\gamma_i)=1$ for $i=1, \dots 2g+h-1$ is well-defined. 

Further, let, for $a \in [0, 2k-\frac92]$, $P_a$ be the polynomial defined in \eqref{Lval}. Consider the polynomial 
$$\hat{P}_a(z):=P_a(2z/\sqrt{N}).$$ It is then easy to deduce from Theorem \ref{Eich} that
\begin{equation}\label{P_1}
\hat{P}_a|_{\frac52-k, \chi}(W_4+1)=0
\end{equation}
for any character $\chi$ on $\Gamma^*_0(4)$ such that $\chi(W_4)=i^{\frac{5}{2}-k}$. (Since there is no risk of confusion, we use the same notation for both characters). 
Recall that $\Gamma_0(4)/\{ \pm 1\}$ is freely generated on the generators $$T, \begin{pmatrix} 1 & 0 \\ 4 & 1 \end{pmatrix}$$
and thus $\Gamma^*_0(4)$ is generated by $-1, T, W_4$ in $\Gamma^*_0(4)$ with only relations $(-1)^2=1$ and $W_4^4=1$.
We consider the map
\[
	\hat{\pi}_f : \Gamma^*_0(4) \to \mathbb{C}_{k-5/2}[z]
\]
induced by the $1$-cocycle condition from the values
\[
	\hat{\pi}_f(W_4) = \hat{P}_a \qquad\mbox{ and }\qquad \hat{\pi}_f(-1)=\hat{\pi}_f(T)=0.
\] 
Then $\hat{\pi}_f$ is well-defined since by \eqref{P_1},
\[
	\hat{\pi}_f(W_4^2) = \hat{\pi}_f(W_4)|_{\frac{5}{2}-k, \chi} W_4 +\hat{\pi}_f(W_4) = \hat{P}_a|_{\frac{5}{2}-k}W_4 + \hat{P}_a = 0.
\]

Based on this cocycle, we will construct a parabolic cocycle with coefficients in a different module which is defined in \cite{PP}. To do so we review some notation from \cite{N}.

We first consider the
Hecke group $H(2)$ generated by the images of $T^2$ and $S$ under the natural projection of $\SL_2(\mathbb Z)$ onto PSL$_2(\mathbb Z)$. Since $4|(k-\frac{5}{2})$, it will be legitimate to use the same notation for elements of $\SL_2(\mathbb Z)$ and their images in PSL$_2(\mathbb Z)$. The group $H(2)$ has only the relation $S^2=1$ (see Sect. 5 of \cite{KR} for the summary of some basic properties of the Hecke groups). 

We now recall a construction of \cite{N} that is used to provide an explicit formula for cocycles on $\PSL_2(\mathbb Z)$ induced by cocycles on $H(2)$.
It is easy to see that a set of representatives of $H(2) \backslash \PSL_2(\mathbb Z)$ is
$$\{1, T, U\}, \qquad \mbox{where} \, \,  U=TS.$$
To be able to keep track of coset representatives, we denote by $u:\PSL_2(\mathbb Z)\to \{1,T,U\}$ the map which sends $x$ to its corresponding coset representative in $H(2)\backslash\PSL_2(\mathbb Z)$. Specifically, let $x \in \PSL_2(\mathbb Z)$. If $H(2)x=H(2)$ (resp. $H(2)T, H(2)U$), set $u(x)=1$, (resp. $T, U$). Some values of $u$ that will be repeatedly used below tacitly are: 
\begin{equation}\label{usef}u(T^{-1})=T, u(TST^{-1})=U.
\end{equation}

For each $x, g \in \PSL_2(\mathbb Z)$, set 
\begin{equation}
\label{kappa}\kappa_{x, g}:=u(x)gu(xg)^{-1} \in H(2).
\end{equation}
One notices that, if $x \in H(2)$, $\kappa_{x, g}=gu(g)^{-1}$ and, if $x, g \in H(2)$, $\kappa_{x, g}=g$. Further, if $H(2)x=H(2)x'$, then $\kappa_{x, g}=\kappa_{x', g}$ and thus $\kappa_{x, g}$ is well-defined as a function of $\left ( H(2) \backslash \PSL_2(\mathbb Z) \right ) \times \PSL_2(\mathbb Z).$ Finally, $\kappa$ satisfies the relation 
\begin{equation}
\label{cocykappa}\kappa_{x, g_1g_2}=\kappa_{x, g_1}\kappa_{xg_1, g_2}.
\end{equation}
Next, we consider the space $\mathcal A$ of holomorphic functions on $\HH$ and the space
$$I_k^0:=\{f: H(2) \backslash \PSL_2(\mathbb Z) \to  \mathcal A\}.$$
Since $\PSL_2(\mathbb Z)$ acts on $\mathcal A$, there is an action of $\PSL_2(\mathbb Z)$ on $I_k^0$, given, for $v \in I_k^0$, by
$$(v||g)(x):=v(xg^{-1})|_{\frac{5}{2}-k}g \quad \mbox{for all} \, \, x \in H(2) \backslash \PSL_2(\mathbb Z),\, g \in \PSL_2(\mathbb Z).$$ Further set
$$I_k:=\{f: H(2) \backslash \PSL_2(\mathbb Z) \to z^{-1}\mathbb{C}_{k-1/2}[z]\} \hookrightarrow I_k^0.$$
Although $||$ is not an action on $I_k$, since $z^{-1}\mathbb{C}_{k-1/2}[z]$ is not closed under $|_{5/2-k}$, the space 
$$W:=\{v \in I_k; v||(S+1)=v||(U^2+U+1)=0\}$$
is well-defined. 
As in the classical case, the space $W$ can be decomposed as a direct sum of the $\pm$-eigenspaces of a certain involution. Specifically, let $\epsilon=\sm -1& 0 \\0 &1\esm$ and let $\epsilon$ act on a $v \in I^0_k$ so that
$$(v||\epsilon)(x):=v(\epsilon x \epsilon)|_{\frac{5}{2}-k} \epsilon \quad \mbox{for all} \, \, x \in H(2) \backslash \PSL_2(\mathbb Z).$$
Then $I^0_k$ decomposes into $\pm$-eigenspaces under the action of $\epsilon$
. Further, since $W$ is closed under the action of $\epsilon$, it also decomposes into $\pm$-eigenspaces $W^{\pm}$. 
We denote the $\pm$-component of $\sigma \in W$ by $\sigma^{\pm}$, i.e.,
$$\sigma^{\pm}=\frac{1}{2}\left (\sigma \pm \sigma||\epsilon\right ) \in W^{\pm}.$$ 
For each $x \in H(2) \backslash \PSL_2(\mathbb Z)$, $\sigma^{+}(x)$ (resp. $\sigma^{-}(x)$) is the ``even" (resp. ``odd") part of $\sigma(x)$.

In Propositions 8.1 and 8.4 of \cite{PP} an element of $W$ is associated to each modular form of weight $k-1/2$: Let $f$ be an element of the space $M_{k-1/2}(H(2))$ of \emph{all} modular (not necessarily cuspidal) forms for $H(2)$ of weight $k-1/2$. Let $a_0(f)$ denote the constant term of its Fourier expansion at infinity and set $f^0(t):=f(t)-a_0(f)$. For each $\gamma \in H(2)\backslash \PSL_2(\mathbb Z)$ consider $\tilde f(\gamma) \in \mathcal A$ given by 
$$\tilde f(\gamma)(z):=\int_z^{i \infty}\left (f|_{k-\frac12}\gamma \right )^0(t) (t-z)^{k-\frac52}dt \qquad \text{for $z \in \HH.$}$$
This induces an element $\tilde f$ of $I_k^0$, since $f$ is $H(2)$-invariant,  and we have the following
\begin{proposition}\label{8.18.4} ({\it Prop. 8.1, 8.4 \cite{PP}}) \\
i. The element $\rho_f:=\tilde f||(1-S)$ belongs to $W$ and, for each $\gamma \in H(2) \backslash \PSL_2(\mathbb Z), z_0, z \in \HH$, we have
\begin{multline}\label{rhof} \rho_f(\gamma)(z)= \int_{z_0}^{i \infty} \left (f|_{k-\frac12}\gamma \right )^0(t)(t-z)^{k-\frac52}dt - a_0 \left ( f|_{k-\frac12}\gamma \right ) \int_0^{z_0}(t-z)^{k-\frac52}dt-\\
\left ( \int_{Sz_0}^{i \infty}\left (f|_{k-\frac12}\gamma S \right )^0(t)(t-z)^{k-\frac52}dt 
- a_0 \left ( f|_{k-\frac12}\gamma S \right ) \int_0^{Sz_0}(t-z)^{k-\frac52}dt \right ) \Big |_{\frac52-k}S
\\+\frac{a_0 \left ( f|_{k-\frac12}\gamma \right )}{k-\frac32}z^{k-\frac32}
+\frac{a_0 \left ( f|_{k-\frac12}\gamma S\right )}{k-\frac32}z^{-1}
\end{multline} 
(independently of $z_0.$)\\ 
ii. The map $\rho^-:M_{k-\frac12}(H(2)) \to W^-$ given by $f \to \rho_f^-$ is an isomorphism.
\end{proposition}
{\bf Remark.} Assertion ii. follows from Prop. 8.4(b) because $H(2)$ is $\PSL_2(\mathbb Z)$-conjugate to $\Gamma_0(2)$ and $N=2$ is of the form prescribed in Prop. 4.4 of \cite{PP}.

We will now construct another element of $W^-$ based on a cusp form $f$ of weight $k$ for $\Gamma_0(N)$ such that $f|_kW_N=f$. 
Earlier in this section, we defined the map $\hat{\pi}_f : \Gamma^*_0(4) \to \mathbb{C}_{k-5/2}[z]$ induced by the $1$-cocycle condition from the values $\hat{\pi}_f(W_4) = \hat{P}_a$ and $\hat{\pi}_f(-1)=\hat{\pi}_f(T)=0.$
Since $4|(k-\frac{5}{2})$, the $1$-cocycle condition of $\hat{\pi}_f$ implies that there is a well-defined $1$-cocycle $\pi'_f: H(2)\to \mathbb{C}_{k-5/2}[z]$ which is induced by the $1$-cocycle relation from the values 
\[	\pi'_f(S)(z) = \hat{P}_a(z/2) \qquad\mbox{ and } \pi'_f(T^2)=0.\] 
This cocycle gives a non-trivial class in $H^{1}(H(2), \mathbb{C}_{k-5/2}[z])$ where the action of $H(2)$ on $\mathbb{C}_{k-5/2}[z]$ is $|_{5/2-k}$. 

We now define a $1$-cocycle of $\PSL_2(\mathbb Z)$ with coefficients in $I_k$ induced by the cocycle $\pi'_f$. For each $g \in \PSL_2(\mathbb Z)$ let $\tilde \pi_f(g)$ be the element of $I_k$ such that 
\begin{equation}\label{tpi}
\tilde \pi_f(g)(x):=\pi'_f(\kappa^{-1}_{x, g^{-1}})|_{\frac{5}{2}-k} u(x)\quad \mbox{for all} \, \, x \in H(2) \backslash \PSL_2(\mathbb Z).
\end{equation}
By Shapiro's lemma or, directly with \eqref{kappa}, we can see that $\tilde \pi_f$ is a $1$-cocycle. We will show that it gives a \emph{parabolic} $1$-cocycle with coefficients in $I_k^0.$
\begin{proposition}\label{keyprop} There is a $v \in I_k$ such that the map $\pi_f$ given by
\begin{equation}\label{pi}\pi_f(\gamma):=\tilde \pi_f(\gamma)-v||(\gamma-1), \qquad \text{for $\gamma \in \PSL_2(\mathbb Z)$}
\end{equation}
is a parabolic cocycle for $\PSL_2(\mathbb Z)$ with coefficients in $I_k^0$, and 
\begin{equation}\label{W}
\pi_f(S) \in W.
\end{equation}
\end{proposition}
\begin{proof} We first compute the values of $\tilde \pi_f(T)$ using the definition of $u$, \eqref{usef}, the relations $S^2=U^3=1$ and the cocycle relation for $\pi'_f$: 
\begin{align}\label{Tvalues1} \tilde \pi_f(T)(1)&= \pi'_f(\kappa^{-1}_{1, T^{-1}})=\pi'_f((T^{-1} u(T^{-1})^{-1})^{-1})=\pi'_f(T^{2})=0\\
\nonumber
\tilde \pi_f(T)(T) &= \pi'_f(\kappa^{-1}_{T, T^{-1}}) |_{\frac52-k}T=\pi'_f((T \, T^{-1} u(1)^{-1})^{-1}) |_{\frac52-k}T=
\pi'_f(1) |_{\frac52-k}T=0\\
\nonumber
\tilde \pi_f(T)(U) & = \pi'_f(\kappa^{-1}_{U, T^{-1}})|_{\frac52-k}U=\pi'_f((U T^{-1} u(UT^{-1})^{-1})^{-1})|_{\frac52-k}U=
\pi'_f((U T^{-1} U^{-1})^{-1})|_{\frac52-k}U
\\ \nonumber
&=\pi'_f(S^{-1}T^{-2})|_{\frac52-k}U=\pi'_f(S)|_{\frac52-k}T^{-2}U+\pi'_f(T^{-2})|_{\frac52-k}U=\pi'_f(S)|_{\frac52-k}T^{-2}U.
\end{align}
It is easy to see that there is a polynomial in $\mathbb C_{k-\frac32}[z]$
\begin{equation}\label{Q}Q_f(z)=\sum_{n=0}^{k-\frac32}b_nz^n
\end{equation} 
such that
\begin{equation}\label{poly}
    \pi'_f(S)|_{\frac52-k}T^{-2}U=Q_f|_{\frac52-k}(T-1).
\end{equation}
It is clear that the coefficients of $Q_f$ can determined from those of $P_a$ by solving a linear system. If, further we require the constant term of $Q_f$ to be $0$, then this determination is unique.

Let $v \in I_k$ be given by 
$$v(1)=v(T)=0 \qquad \text{and $v(U)=Q_f$}$$
where $Q_f$ is a polynomial satisfying \eqref{poly} with a zero constant term.
Then, $(v||(T-1))(\gamma)=v(\gamma T^{-1})|_{\frac52-k}T-v(\gamma)$ and hence, with \eqref{usef},
\begin{align*}\label{vvalues} (v||(T-1))(1)&= v( T^{-1})|_{\frac52-k}T-v(1)=v(T)|_{\frac52-k}T-v(1)=0 \\
(v||(T-1))(T) &= v(T T^{-1})|_{\frac52-k}T-v(T)=0\\
(v||(T-1))(U) & = v(U T^{-1})|_{\frac52-k}T-v(U)=v(TS T^{-1})|_{\frac52-k}T-v(U)=v(U)|_{\frac52-k}(T-1)\\ \nonumber
&=\pi'_f(S)|_{\frac52-k}T^2U=\tilde \pi_f(T)(U)
\end{align*}
This and \eqref{Tvalues1} imply that the map $\pi_f$ given by \eqref{pi} satisfies 
\begin{equation}\label{par} \pi_f(T)(x)=\tilde \pi_f(T)(x)-(v||(T-1))(x)=0 \qquad \text{for all $x \in H(2) \backslash \PSL_2(\mathbb Z),$}
\end{equation}
i.e. $\pi_f(T)=0.$ Since $\gamma \to v||(\gamma-1)$ is a coboundary (with coefficients in $I_k^0$), $\pi_f$ remains a $1$-cocycle, but now, with coefficients in $I_k^0$. With \eqref{par} it is a parabolic one. The $1$-cocycle condition implies, in particular, that $\pi_f(S)=\pi_f(U)$ satisfies 
\begin{equation}\label{Manin} \pi_f(S)||(S+1)=\pi_f(S)||(U^2+U+1)=0
\end{equation}

On the other hand $\pi_f(S) \in I_k$. Indeed, $(v||(S-1))(\gamma)=v(\gamma S^{-1})|_{\frac52-k}S-v(\gamma)$ and hence, with \eqref{usef},
\begin{align}\label{Svalues1} (v||(S-1))(1)&= v( S^{-1})|_{\frac52-k}S-v(1)=v(1)|_{\frac52-k}1-v(1)=0 \\
\label{Svalues2}
(v||(S-1))(T) &= v(T S^{-1})|_{\frac52-k}S-v(T)=v(U)|_{\frac52-k}S=Q_f|_{\frac52-k}S \in z^{-1}\mathbb C_{k-\frac12}[z]\\
\label{Svalues3}(v||(S-1))(U) & = v(U S^{-1})|_{\frac52-k}S-v(U)=v(T)|_{\frac52-k}S-v(U)=-Q_f \in z^{-1}\mathbb C_{k-\frac12}[z].
\end{align}
Therefore $\pi_f(S)=\tilde \pi_f(S)-v||(S-1) \in I_k.$ This, together with \eqref{Manin}, imply \eqref{W}.
\end{proof}
\begin{proposition}\label{lift2} Let $k \in \frac{1}{2}+\mathbb Z$ such that $k>\frac{5}{2}$ and $4|(k-\frac{5}{2})$. For each $f \in S_k(\Gamma_0(N))$ such that $f|_kW_N=f$ and for each $a \in [0, 2k-\frac92]$, there is an explicit $g \in M_{k-\frac{1}{2}}(\Gamma^*_0(4))$, given in Th. \ref{expl}, such that, for all odd $n \in \{1, \dots, k-\frac{7}{2}\}$,
\begin{multline}\label{equal2}
i^{a+1}\left (\binom{k-2}{n} N^{\frac{n}{2}} \Lambda(f, a+1-n)+ (-1)^{n+1} \binom{k-2}{n+\frac12} N^{2k-a-\frac{3n}{2}-\frac{19}{4}}\Lambda(f, 2k-\frac72-a-n) \right ) \\=
\binom{k-\frac{5}{2}}{n}i^{-1-n}2^{k-\frac{3}{2}-n}\Lambda \left ( g, k-\frac{3}{2}-n\right ).
\end{multline}
\end{proposition}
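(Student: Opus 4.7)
The plan is to apply the Eichler--Shimura theorem (Theorem \ref{PPES}) to the class $\pi_f(U)^-\in W^-$. Since $C^-=\{0\}$, the map $\rho^-$ is a genuine isomorphism $S_{k-\frac12}(H(2))\to W^-$, and so there is a unique $\tilde g\in S_{k-\frac12}(H(2))$ with $\rho^-_{\tilde g}=\pi_f(U)^-$. I would then set $g(z):=\tilde g(2z)$ and verify that $g\in S_{k-\frac12}(\Gamma_0^*(4))$ with $g|_{k-\frac12}W_4=g$. Indeed, the $T$-invariance of $g$ is immediate from the $T^2$-invariance of $\tilde g$; the $W_4$-invariance follows from $\tilde g|_{k-\frac12}S=\tilde g$ via the matrix identity $W_4=\sm 1&0\\0&2\esm S\sm 1&0\\0&1/2\esm$ combined with the rescaling $z\mapsto 2z$; and invariance under the second generator $\sm 1&0\\4&1\esm=W_4T^{-1}W_4^{-1}$ of $\Gamma_0(4)/\{\pm 1\}$ is then forced by the $T$- and $W_4$-invariances (noting that $W_4^2=-I$ acts trivially on weight $k-\frac12\equiv 2\pmod 4$ forms).

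The identity \eqref{equal2} then follows by equating the coefficients of $\rho^-_{\tilde g}$ and $\pi_f(U)^-$ at the coset $1\in H(2)\backslash\PSL_2(\mathbb Z)$. On the period-polynomial side, the $1$-cocycle relation with $\pi'_f(T^2)=0$ and $\pi'_f(S)=\hat P_a(z/2)$ gives $\pi_f(U)(1)=\pi'_f(T^2S)=\hat P_a(z/2)$; extracting its odd-power coefficients, substituting $\hat P_a(z)=P_a(2z/\sqrt N)$, and invoking Theorem \ref{Eich}(ii) with the simplification $i^{2n-k+\frac12}=(-1)^{n+1}$ (from $k-\frac12\equiv 2\pmod 4$) yields precisely the left-hand side of \eqref{equal2}. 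On the modular-form side, $\rho_{\tilde g}(1)(z)=\int_0^{i\infty}\tilde g(w)(w-z)^{k-\frac52}\,dw$ expands, via the binomial theorem together with the Mellin-transform formula \eqref{Lambda}, to a polynomial with coefficients involving $\Lambda_{\tilde g}(k-\frac32-n)$; the change of variable $u=2t$ in the Mellin integral, applied to $\tilde g(w)=g(w/2)$, produces the identity $\Lambda_{\tilde g}(s)=2^s\Lambda_g(s)$, converting those coefficients into $\binom{k-\frac52}{n}i^{-1-n}2^{k-\frac32-n}\Lambda_g(k-\frac32-n)$, the right-hand side of \eqref{equal2}.

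The main obstacle I anticipate is the verification of the full $\Gamma_0^*(4)$-invariance of $g$, which requires a careful bookkeeping of the conjugation and rescaling identities relating $H(2)$ to $\Gamma_0^*(4)$, together with the slash-action conventions for the matrices of determinant $\neq 1$ appearing therein. Once $g(z)=\tilde g(2z)$ is established as an element of the correct space, the term-by-term coefficient match in \eqref{equal2} is a routine, if delicate, binomial expansion.
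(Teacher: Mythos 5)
Your proposal is correct and follows essentially the same route as the paper: solve $\rho^{-}_{\tilde g}=\pi_f(U)^{-}$ via Theorem \ref{PPES} (using $C^-=\{0\}$), evaluate both sides at the coset $1$, where $\pi_f(U)(1)=\pi'_f(T^2S)=P_a(z/\sqrt{N})$, and pass to $g(z)=\tilde g(2z)$ using $\Lambda_{\tilde g}(s)=2^s\Lambda_g(s)$. The only cosmetic difference is that you verify the $\Gamma_0^*(4)$-modularity and $W_4$-invariance of $g$ directly through conjugation identities, whereas the paper cites the integral-weight analogue of the proposition in Section 8 of \cite{KR}.
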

\begin{proof}
By \eqref{W} and Prop. \ref{8.18.4}(ii), there exists a $g_1 \in M_{k-\frac12}(H(2))$ such that
\begin{equation}\label{basic}
    \pi_f(S)^{-}=\rho^{-}_{g_1}.
\end{equation} 
Since $H(2)\epsilon T\epsilon=H(2)T$
and $H(2)\epsilon U\epsilon=H(2)T^{-1}S^{-1}=H(2)U$, we see that for each $x \in H(2) \backslash \PSL_2(\mathbb Z)$, $\pi_f(S)^{-}(x)$
(resp. $\rho^{-}_{g_1}(x)$) is the part of the polynomial $\pi_f(S)(x)$ (resp. $\rho_{g_1}(x)$) corresponding to its odd powers. By the definition of $\pi_f$, \eqref{Svalues1} and since $\kappa_{1, S^{-1}}=S^{-1}$, we deduce
\begin{equation}\label{piS}
\pi_f(S)(1)=\tilde \pi(S)(1)-(v||(S-1))(1)=\pi'_f(\kappa^{-1}_{1, S^{-1}})=\pi'_f(S)=\hat P_a(z/2)=P_a(z/\sqrt{N}).
\end{equation}
This and \eqref{Lval} show that the $n$-th coefficient of $\pi_f(S)(1)$ equals the left-hand side of \eqref{equal2}. 

On the other hand, \eqref{rhof}, \eqref{Lambdacuspdecom} and an application of the binomial theorem imply 
\begin{multline}\label{rhofexp} \rho_{g_1}(\gamma)(z)= \sum_{n=0}^{k-\frac52}\binom{k-\frac52}{n}(-z)^n \left \{ \int_{z_0}^{i \infty} \left (g_1|_{k-\frac12}\gamma \right )^0(t)t^{k-\frac52-n}dt - \frac{a_0 \left ( g_1|_{k-\frac12}\gamma \right )z_0^{k-\frac32-n}}{k-\frac32-n}- \right.\\
\left. (-1)^n\left ( \int_{Sz_0}^{i \infty}\left (g_1|_{k-\frac12}\gamma S \right )^0(t)t^{n}dt 
- \frac{a_0 \left ( g_1|_{k-\frac12}\gamma S \right ) (Sz_0)^{n+1}}{n+1} \right ) \right \}
\\+\frac{a_0 \left ( g_1|_{k-\frac12}\gamma \right )}{k-\frac32}z^{k-\frac32}
+\frac{a_0 \left ( g_1|_{k-\frac12}\gamma S\right )}{k-\frac32}z^{-1}
=
\\ \sum_{n=0}^{k-\frac52}\binom{k-\frac52}{n}(-z)^n i^{k-\frac32-n}\Lambda \left(g_1|_{k-\frac12} \gamma, k-\frac32-n \right )
+\frac{a_0 \left ( g_1|_{k-\frac12}\gamma \right )}{k-\frac32}z^{k-\frac32}
+\frac{a_0 \left ( g_1|_{k-\frac12}\gamma S\right )}{k-\frac32}z^{-1}.
\end{multline} 
Therefore, for odd $n$ with $0 \le n \le k-\frac{5}{2}$, the $n$-th coefficient of $\rho_{g_1}(1)$ equals
$$\binom{k-\frac{5}{2}}{n}i^{-1-n}\Lambda\left (g_1, k-\frac{3}{2}-n\right ).$$ Further, by the analogue of the proposition in Section 8 of \cite{KR} for integral weights, 
the function $g$ such that $g(z):=g_1(2z)$ is a weight $k-\frac{1}{2}$ modular form for $\Gamma^*_0(4)$ and $\Lambda(g_1,s)=2^s\Lambda(g, s)$. This gives the expression in the right-hand side of \eqref{equal2}. With \eqref{basic}, we deduce \eqref{equal2}.
\end{proof}

We will identify explicitly the integral weight modular form to which a half-integral weight form is lifted according to Prop \ref{lift2} by using results of \cite{PP}. 

We first decompose the function $g$ of Prop. \ref{lift2} in terms of its ``cuspidal" and ``Eisenstein" part. Specifically, let $E(z)$ denote the weight $k-1/2$ Eisenstein series for $\PSL_2(\mathbb Z)$ normalised so that its constant term of its Fourier expansion is $1$. We consider the Eisenstein series $E(2z)$ and $E(z)+2^{k-1/2}E(4z)$ in $M_{k-\frac12}(\Gamma_0^*(4))$ corresponding to the two cusps of $\Gamma_0^*(4).$ Then
\begin{equation}\label{decom} g=\og+\alpha E(2z)+\beta(E(z)+2^{k-1/2}E(4z))
\end{equation} for some $\og \in S_{k-1/2}(\Gamma^*_0(4))$ and some $\alpha, \beta \in \mathbb C.$ 

To evaluate $\alpha, \beta$, we note that, as above, the function
$\og_1(z):=\og(z/2)$ belongs to $S_{k-1/2}(H(2))$ and, likewise,
$$E_1(z):=E(z) \quad \text{and $E_2(z):=E(z/2)+2^{k-1/2}E(2z)$}$$ are modular of weight $k-1/2$ for the group $H(2).$ Equ. \eqref{basic} implies
\begin{equation}\label{basic2} \pi_f(S)^{-}=\rho^{-}_{\, \, \og_1}+\alpha\rho^{-}_{E_1}
+\beta\rho^-_{E_2},
\end{equation}
With the definition of $\pi'_f$ and \eqref{Svalues1}-\eqref{Svalues3}, we have
\begin{align}\label{indep}
&\pi_f(S)(1)=P_a(z/\sqrt{N}), \\
&\pi_f(S)(T)=\pi'_f(\kappa^{-1}_{T, S^{-1}})|_{\frac{5}{2}-k}T-(v||(S-1))(T)=\pi_f'(1)|_{\frac{5}{2}-k}T-Q_f|_{\frac52-k}S= -Q_f|_{\frac52-k}S \nonumber\\
&\pi_f(S)(U)=\pi'_f(\kappa^{-1}_{U, S^{-1}})|_{\frac{5}{2}-k}U-(v||(U-1))(T)=\pi'_f(1)|_{\frac{5}{2}-k}U+Q_f=Q_f.
\nonumber\end{align}
Since $\og_1$ is cuspidal, $\rho_{\og_1}(x)(z) \in \mathbb C_{k-\frac52}[z]$ for all $x \in H(2)\backslash \PSL_2(\mathbb Z)$ and hence, with \eqref{indep} and \eqref{rhofexp}, we deduce, by comparing the coefficients of $z^{-1}$ and $z^{k-\frac32}$ in \eqref{basic2}:
\begin{align}\label{indepext1}
&0=  \frac{a_0\left ( \alpha E_1+\beta E_2\right )}{k-\frac32}z^{k-\frac32}
+\frac{a_0 \left ( \alpha E_1|_{k-\frac12}S+\beta E_2|_{k-\frac12}S\right )}{k-\frac32}z^{-1} \\
&b_{k-\frac{3}{2}}z^{-1}=\frac{a_0 \left ( \alpha E_1|_{k-\frac12}T+\beta E_2|_{k-\frac12}T \right )}{k-\frac32}z^{k-\frac32}
+\frac{a_0 \left (\alpha E_1|_{k-\frac12}TS+\beta E_2|_{k-\frac12}TS\right )}{k-\frac32}z^{-1}.
\label{indepext2}\end{align}
Since $a_0(E_1)=1$ and $a_0(E_2)=1+2^{k-\frac12}$, \eqref{indepext1} implies $\alpha=-(1+2^{k-\frac12})\beta$. Further, the transformation law of $E(z)$ implies
$(E(\cdot /2)|_{k-\frac12}U)(z)=E((z-1)/2)$
and  $(E(2\cdot )|_{k-\frac12}U)(z)=2^{\frac12-k}E(z/2).$
Hence $a_0(E_2|_{k-\frac12}U)=2$ and, since $E_1|_{k-\frac12}U=E_1$,  \eqref{indepext2} gives
\begin{equation}\label{alpha}
\beta=\frac{(k-\frac32)b_{k-\frac32}}{1-2^{k-\frac12}} \quad \text{and $\alpha=\frac{(k-\frac32)(1+2^{k-\frac12})b_{k-\frac32}}{2^{k-\frac12}-1}$}
\end{equation}
To determine the ``cuspidal" part of $g_1(z)=g(z/2)$, we employ the generalised Haberland formula of \cite{PP}. To state it we review some notation: For $\sum a_n z^n, \sum b_n z^n \in \mathbb C^{k-\frac{5}{2}}[z]$, we set
 $$\langle \sum a_n z^n, \sum b_n z^n \rangle=\sum (-1)^{n}\binom{k-\frac{5}{2}}{n}^{-1} a_n b_{k-\frac{5}{2}-n}$$
and, then, for $f, g \in I_k$ with $f(x), g(x) \in \mathbb C_{k-\frac{5}{2}}[z]$ for all $x \in H(2) \backslash \PSL_2(\mathbb Z),$
$$\langle \langle f, g \rangle \rangle=
\sum_{x \in 
H(2) \backslash \PSL_2(\mathbb Z)} \langle f(x), g(x) \rangle$$
(we have renormalised this so that it is consistent with our normalisation of the Petersson scalar product.) 
Finally, to define the generalised Haberland pairing in $W \times W$, we note that, as shown in Section 8 of \cite{PP}, each $h \in W$ has a unique decomposition of the form 
$$h=\oh+\tilde h||(1-S) \quad \text{with $\tilde h(x):=c_x z^{k-\frac32}$ for some $c_x$ such that $c_{x}=c_{xT}$ ($x \in H(2) \backslash \PSL_2(\mathbb Z)$)}$$
and some $\oh \in I_k$ with $\oh(x) \in \mathbb C_{k-\frac{5}{2}}[z]$ for all $x \in H(2) \backslash \PSL_2(\mathbb Z).$ Then, for $h, k \in W$ we set
$$\{h, k\}:=\langle \langle \oh||(T-T^{-1}), \ok \rangle \rangle+
\langle \langle 2\tilde h||(T-T^{-1}), \ok \rangle \rangle+\langle \langle \oh, 2\tilde k||(T^{-1}-T)\rangle \rangle.$$
The generalisation of Haberland's formula proved in \cite{PP} (Theorem 8.6(c)), in the case we are considering here can be stated as
\begin{equation}\label{hab}-3(2i)^{k-\frac32}(f, g)=\{\rho^-_f, \bar\rho^+_g\} \quad \text{for $f, g, \in M_{k-\frac12}(H(2))$}
\end{equation}
where $(f, g)$ is a generalisation of the Petersson scalar product valid for not necessarily cuspidal modular forms. 
We will apply this formula to $g_1$ and to any $h \in S_{k-\frac12}(H(2)).$ In that case, we do not need the generalisation of the Petersson scalar product. The standard one suffices because $h$ is cuspidal. Then, \eqref{hab} simplifies as
\begin{equation}\label{habsim}-3(2i)^{k-\frac32}(\og_1, h)=-3(2i)^{k-\frac32}(g_1, h)=
\langle \langle \overset{o}{\rho_{g_1}}{}^{-}||(T-T^{-1}), \bar \rho_h^+ \rangle \rangle+
\langle \langle 2\tilde \rho_{g_1}^{-}||(T-T^{-1}), \bar \rho_h^+\rangle \rangle
\end{equation}
since $h$ is cuspidal and hence $\rho_h=\overset{o}{\rho_h}.$

We next determine $\overset{o}{\rho_{g_1}}{}^{-}$ and $\tilde \rho_{g_1}$: First, \eqref{basic}, combined with \eqref{indep}, \eqref{Lval} and \eqref{Q} imply that the coefficient $\alpha_n$ of $z^n$ in $\overset{o}{\rho_{g_1}}{}^{-}(x)$, for odd $n \in \{0, \dots, k-\frac52\}$, equals
$$ \alpha_n(x)=\begin{cases} i^{a+1}\left (\binom{k-2}{n} N^{\frac{n}{2}} \Lambda(f, a+1-n)+ \binom{k-2}{n+\frac12} N^{2k-a-\frac{3n}{2}-\frac{19}{4}}\Lambda(f, 2k-\frac72-a-n) \right ), \,  \text{if $x=1$} \\  
b_{k-\frac52-n}  \qquad \text{if $x=T$, and} \\b_{n} \qquad \text{if $x=U$.} 
\end{cases}$$
We will next verify that $$\tilde \rho_{g_1}(x)=\tilde \rho_{g_1}^-(x)=c_xz^{k-\frac32} \qquad \text{with $c_1=c_T=0$ and $c_U=b_{k-\frac32}.$}$$ Indeed, 
$\tilde \rho_{g_1}||(1-S)(x)=c_xz^{k-\frac32}+c_{xS}z^{-1}$ and hence it equals
$0$ for $x=1$, $b_{k-\frac{3}{2}}z^{-1}$, for $x=T$ and $b_{k-\frac{3}{2}}z^{k-\frac32}$, for $x=T$. These agree with the corresponding terms of $\rho_{g_1}^{-}(x)=\pi_f(S)^-(x)$ as given by \eqref{indep}, for all $x.$

We finally determine $\rho_h^+$ for $h \in S_{k-\frac12}(H(2))$: For each $x \in H(2)\backslash \PSL_2(\mathbb Z)$ and $n=0, \dots k-\frac{5}{2}$,  we denote by $r^{\pm}_{x, n}(h)$ the constants such that 
\begin{equation}\label{rhopol}
\rho^{+}_h(x)(z) = \sum_{n=0}^{k-\frac{5}{2}}\binom{k-\frac{5}{2}}{n} r^{+}_{x, n}(h) z^{k-\frac{5}{2}-n}.
\end{equation}
In particular, $r^{+}_{x, n}(h)=0$, if $n$ is odd. With this notation, for $x \in H(2)\backslash \PSL_2(\mathbb Z)$ and $0 \le n \le k-\frac{5}{2}$, we let $R^{+}_{x, n}$ be the unique element of $S_{k-\frac{1}{2}}(H(2))$ such that 
\[r^+_{x, n}(h)=(h, R^+_{x, n}) \quad \mbox{for all} \, \, h \in  S_{k-\frac{1}{2}}(H(2))\].

We are now ready to compute the terms in the RHS of \eqref{habsim}. We first note that
$H(2)xT=H(2)xT^{-1}$ for each $x \in H(2) \backslash \PSL_2(\mathbb Z)$ and hence
$$\overset{o}{\rho_{g_1}}{}^{-}||(T-T^{-1})(x)=\overset{o}{\rho_{g_1}}{}^{-}(xT)|_{\frac52-k}(T-T^{-1})=\sum_{\substack{\ell=0 \\ \ell \, \text{even}}}^{k-\frac52}s_{\ell}(xT)z^{\ell}$$
where 
\begin{equation}\label{s}
s_{\ell}(x)= 2\sum_{\substack{j=0 \\ j \, \text{odd}}}^{k-\frac52} \binom{j}{\ell} \alpha_j(x).
\end{equation}
Likewise, $$\tilde \rho_{g_1}^-||(T-T^{-1})(x)=c_{xT}\left ( (z+1)^{k-\frac32}-(z-1)^{k-\frac32} \right )=\sum_{\substack{\ell=0 \\ \ell \, \text{even}}}^{k-\frac52}2c_{xT} \binom{k-\frac32}{\ell}z^{\ell}.$$
(which is non-zero only for $x=U.)$

With the definition of the pairing $\langle \langle \cdot, \cdot \rangle \rangle$, equ. \eqref{habsim} then becomes
\begin{equation}\label{habsim2}-3(2i)^{k-\frac32}(\og_1, h)=
\sum_{x \in H(2)\backslash \PSL_2(\mathbb Z)} \sum_{\substack{\ell=0 \\ \ell \, \text{even}}}^{k-\frac52} \left (s_{\ell}(xT)
+2c_{xT}\binom{k-\frac32}{\ell} \right ) \overline{(h, R_{x, \ell}^+)}
\end{equation}
 By the non-degeneracy of the Petersson scalar product, we
deduce 
$$\og_1=\frac{-(2i)^{\frac32-k}}{3}\sum_{x \in H(2)\backslash \PSL_2(\mathbb Z)} \sum_{\substack{\ell=0 \\ \ell \, \text{even}}}^{k-\frac52} \left (s_{\ell}(xT)
+2c_{xT}\binom{k-\frac32}{\ell} \right )R_{x, \ell}^+.$$ 
Taking into account that $g(z)=g_1(2z)$ is of weight $k-\frac{1}{2}$ for $\Gamma_0^*(4)$, this equality implies  
\begin{theorem}\label{expl}
    Let $k \in \frac{1}{2}+\mathbb Z$ such that $k>\frac{5}{2}$ and $4|(k-\frac{5}{2})$. For each $f \in S_k(\Gamma_0(N))$ such that $f|_kW_N=f$ and for each $a \in [0, 2k-\frac92]$, the ``lift" $g \in M_{k-\frac{1}{2}}(\Gamma^{*}_0(4))$ of Prop. \ref{lift2} is given by
    $$g(z)=\frac{-(2i)^{\frac32-k}}{3}\sum_{x \in H(2)\backslash \PSL_2(\mathbb Z)} \sum_{\substack{\ell=0 \\ \ell \, \text{even}}}^{k-\frac52}  \left (s_{\ell}(xT)
+2c_{xT}\binom{k-\frac32}{\ell} \right )R_{x, \ell}^+(2z)+\alpha E_1(2z)+\beta E_2(2z)$$
    where $s_{\ell}(xT)$ and $\alpha, \beta$ are given by \eqref{s} and \eqref{alpha} respectively.
\end{theorem}


\begin{thebibliography}{99}
\bibitem{BCD}
R. Bruggeman, Yj. Choie, N. Diamantis \emph{Holomorphic Automorphic Forms
and Cohomology}, Memoirs of the AMS, 253; 1212 (2018)

\bibitem{Iw} H. Iwaniec, \emph{Topics in Classical Automorphic Forms} Graduate Studies in Mathematics, vol. 17, American Mathematical Society, Providence, RI, 1997. 

\bibitem{KR}
W. Kohnen, W. Raji, \emph{Special values of Hecke $L$-functions of
modular forms of half-integral weight and
cohomology}, Res Math Sci (2018) 5:22

\bibitem{N} D. Naidu, \emph{Categorical Morita Equivalence for Group-Theoretical Categories}, 
Comm. in Algebra (2007) 35:11, 3544-3565
 
\bibitem{NIST} F. Olver, D. Lozier, R. Boisvert, and
C. Clark, \textsl{NIST handbook of mathematical
functions} U.S.
Department of Commerce, National Institute of Standards
and Technology,
Washington, DC; Cambridge University Press, Cambridge, 2010.

\bibitem{PP}
 B. Pa\c{s}ol and A. Popa, {\it Modular forms and period polynomials}, Proc. London Math. Soc. (2013) {\bf 107} no. 4, 713--743.

\bibitem{Sh}
G.Shimura \emph{On modular forms of half-integral weight}, Ann. Math. 97, 440–481 (1973)


 
\end{thebibliography}
\end{document}